\title{$G_2$-instantons on the ALC members of the $\Bsev$ family}
\author{Jakob Stein}
\address{University of Campinas, University of Bath}
\email{jstein@unicamp.br}
\author{Matt Turner}
\date{}
\theoremstyle{definition}
\theoremstyle{plain}
\newtheorem{theorem}{Theorem}[section]
\newtheorem{corollary}[theorem]{Corollary}
\newtheorem{lemma}[theorem]{Lemma}
\newtheorem{prop}[theorem]{Proposition}
\newtheorem{proposition}[theorem]{Proposition}
\newtheorem{thmx}{Theorem}
\newtheorem{remark}[theorem]{Remark}
\theoremstyle{plain}
\DeclareMathOperator{\SU}{SU}
\DeclareMathOperator{\su}{\mathfrak{su}}
\DeclareMathOperator{\Imom}{Im\,\Omega}
\begin{document}

\newcommand{\G}{\SU(2)^2\times\textup{U}(1)}
\newcommand{\Bsev}{\mathbb{B}_7}
\newcommand{\Csev}{\mathbb{C}_7}
\newcommand{\Dsev}{\mathbb{D}_7}
\newcommand{\imH}{\text{im}(\mathbb{H})}
\newcommand{\Ad}{\text{Ad}}
\newcommand{\Hom}{\text{Hom}}
\newcommand{\R}{\mathbb{R}}
\newcommand{\Z}{\mathbb{Z}}
\newcommand{\Dirac}{\slashed{\text{D}}}
\newcommand{\Reom}{\text{Re}\,\Omega}
\newcommand{\Pid}{P_{\textup{id}}}

\newcommand{\f}{\bm{f}}
\newcommand{\g}{\bm{g}}
\newcommand{\h}{\bm{h}}

\newcommand*{\vertchar}[2][0pt]{%
  \tikz[
    inner sep=0pt,
    shorten >=-.15ex,
    shorten <=-.15ex,
    line cap=round,
    baseline=(c.base),
  ]\draw
    (0,0) node (c) {#2}
    ($(c.south)+(#1,0)$) -- ($(c.north)+(#1,0)$);%
}
\setlength{\parskip}{.4cm}
\setlength{\parindent}{0cm}

\begin{abstract}
    Using co-homogeneity one symmetries, we construct a two-parameter family of non-abelian $G_2$-instantons on every member of the asymptotically locally conical $\mathbb{B}_7$-family of $G_2$-metrics on $S^3 \times \mathbb{R}^4 $, and classify the resulting solutions. These solutions can be described as perturbations of a one-parameter family of abelian instantons, arising from the Killing vector-field generating the asymptotic circle fibre. Generically, these perturbations decay exponentially to the model, but we find a one-parameter family of instantons with polynomial decay. Moreover, we relate the two-parameter family to a lift of an explicit two-parameter family of anti-self-dual instantons on Taub-NUT $\mathbb{R}^4$, fibred over $S^3$ in an adiabatic limit. 
\end{abstract}
\maketitle
\section{Introduction}
Motivated by the conjectural picture outlined in \cite{donaldson1996gauge, donaldson2009gauge}, a number of  constructions of special Yang-Mills instantons on compact $G_2$-manifolds have appeared in the literature e.g. \cite{walpuski:g2instantons, walpuski:tcsinstantons}. In particular, if $(M, \varphi)$ is a $G_2$-manifold, equipped with a principal $G$-bundle $P \rightarrow M$ for a compact Lie group $G$, a connection $A$ on $P$ is called a $G_2$-\textit{instanton} if it satisfies the $G_2$-\textit{instanton equations}, namely 
\begin{align} \label{eq:g2instantons00}
F_A \wedge * \varphi = 0 
\end{align}
where $*$ is the Hodge star of the Riemannian metric defined by $\varphi \in \Omega^3 (M)$, and $F_A \in \Omega^2 \left(\mathrm{ad} P \right)$ is the curvature of $A$. These $G_2$-instantons generalise the \textit{anti-self-dual} (ASD) Yang-Mills instantons found in dimension four, which can  also appear as solutions of \eqref{eq:g2instantons00} in re-scaled limits along associative submanifolds of $(M, \varphi)$, see \cite{tian2000gauge}.

In this paper, we consider $G_2$-instantons on a family of non-compact $G_2$-manifolds, asymptotic to a circle fibration over a 6-dimensional Calabi-Yau cone. Geometries of this type are referred to as \textit{asymptotically locally conical} (ALC) in \cite{FoscoloALC}, and generalise the asymptotically locally flat (ALF) hyperk\"{a}hler metrics appearing in dimension four. Thanks to the  construction in \cite{FoscoloALC}, complete families of ALC $G_2$-metrics exist in abundance: given an asymptotically conical (AC) Calabi-Yau 3-fold and a suitable non-trivial circle bundle, \cite{FoscoloALC} produces a one-parameter family of circle-invariant ALC $G_2$-metrics on the total space. The family is  parameterised by the asymptotic length $\ell$ of the circle fibre, and collapses with bounded curvature back to the AC Calabi-Yau in the limit $\ell \rightarrow 0$. The circle-invariant $G_2$-structures produced by \cite{FoscoloALC} admit natural abelian solutions to \eqref{eq:g2instantons00},  by considering the harmonic one-form given by the metric dual of the Killing vector-field generating the circle action, c.f. \cite[Proposition 2.15]{lotay:g2conifolds}.   

On the other hand, circle-invariant solutions of \eqref{eq:g2instantons00} on these manifolds are closely related to Hermitian Yang-Mills connections on the AC Calabi-Yau in the collapsed limit, but we will not pursue this line of investigation here; further details will appear elsewhere. Instead, we will restrict our focus to the only known family of ALC $G_2$-metrics that do not collapse with bounded curvature as $\ell \rightarrow 0$, referred to as the $\Bsev$-family in \cite{gibbonspope:g2mtheory}, on $S^3 \times \R^4$. Its existence was predicted in \cite{bggg}, and it was first constructed in \cite{bogoyavg2}, c.f. \cite{FHN18}. The associated one-parameter family of $G_2$-structures also possess \textit{co-homogeneity one} symmetries, i.e. there is a compact Lie group of symmetries, in this case $SU(2)^2 \times U(1)$, with generic orbits of co-dimension one. Further discussions on the geometry of this family, and historical remarks, can be found in \cite{ST23}.  

Exploiting these co-homogeneity one symmetries, a two-parameter family of $SU(2)^2 \times U(1)$-invariant $G_2$-instantons on the $\Bsev$-family, with gauge group $SU(2)$, was found by Lotay-Oliveira in \cite{LO18b}. These appear as (non-explicit) solutions to the ODE system resulting from \eqref{eq:g2instantons00} with the enhanced symmetry. Their examples can be viewed in the following way: as before, consider the vector-field $X$ generated by the $U(1)$-symmetry, which has period $2\pi$ at infinity. Using the freedom to scale by a constant $\kappa$, one obtains a one-parameter family $\kappa d X$ of two-forms solving \eqref{eq:g2instantons00}. For each fixed $\kappa>1$, these abelian solutions can be perturbed to a one-parameter family of non-abelian instantons asymptotic to $\kappa X$ at some exponential rate.  

However, the analysis in \cite{LO18b} is valid only sufficiently close to the only ALC metric of the $\Bsev$ family that is known explicitly, namely the metric constructed by Brandhuber–Gomis–Gubser–Gukov (BGGG) in \cite{bggg}, and fails to determine which of the abelian instantons are rigid as invariant solutions of \eqref{eq:g2instantons00} for the generic member of the family, see Remark \ref{remark:LO}. Moreover,  even for the BBBG metric, their analysis fails to capture the full space of solutions. 

\subsection*{Main results and plan of the paper} In this paper, we revisit the study of $SU(2)^2\times U(1)$-invariant $G_2$-instantons on the $\Bsev$-family appearing in \cite{LO18b} using more sophisticated methods, and classifying solutions to the resulting ODE system, for every member of the $\Bsev$-family. In doing so, we construct new non-abelian instantons on the $\Bsev$-family that have curvature decay at polynomial rate normal to the asymptotic circle fibre, and have trivial holonomy around this circle at infinity. These appear at the boundary of the space of solutions with exponential decay, generalising results appearing in the thesis of the second author for the BGGG metric.  

To facilitate our study of the ODE system, which can be found as \eqref{eq:B7odes} in the text, we begin with some preliminaries on the $\Bsev$-family in \S \ref{section:G2define}. As well as proving some useful inequalities, we write down the metric coefficients appearing in \cite{LO18b} in terms of the coefficients of the $G_2$-structure in \cite{FHN18}. 

We study $SU(2)^2 \times U(1)$-invariant $G_2$-instantons on these manifolds in \S \ref{section:G2overview}. The $SU(2)^2 \times U(1)$-equivariant $SU(2)$-bundles over the $\Bsev$-family admitting irreducible invariant connections are classified: coinciding with the classification up to $SU(2)^2$-equivariance in \cite{LO18b}. There only two such equivariant bundles on $\Bsev$-family, referred to $P_1$, $P_\mathrm{id}$, corresponding to the trivial and non-trivial lifts of the singular isotropy subgroup to the total space. As in \cite{LO18b}, we focus on studying invariant instantons on $P_1$. On this bundle, the ODE system \eqref{eq:B7odes} simplifies, and the additional $U(1)$-action acts freely on the total space, so the associated one-parameter family of abelian instantons can be extended over the whole manifold. However, we note that it may be possible, using explicit solutions in certain adiabatic limits, to recover some solutions on $P_\mathrm{id}$, c.f. \S \ref{sec:taubnut}. 

We prove our main theorem in the remainder of \S \ref{section:G2overview}. We first recall from \cite{LO18b} that $SU(2)^2 \times U(1)$-invariant $G_2$-instantons on $P_1$ are in a two-parameter family $(f^+_1, g^+_1)$ in a neighbourhood of the singular orbit $S^3$, appearing as initial conditions for the ODE system. With this in mind, the main theorem can be stated as follows:  
\begin{thmx} \label{thm:A} Let $S^3 \times \R^4$ be equipped with the $\Bsev$-metric of asymptotic circle length $\ell>0$. Then $SU(2)^2 \times U(1)$-invariant $G_2$-instantons on $P_1$ are either in a one-parameter abelian family, or in an irreducible two-parameter family. Respectively, these correspond to initial conditions $(f^+_1, g^+_1)$ with $g^+_1 \geq \tfrac{1}{2}\ell^{-2}$,  $0<|f^+_1|\leq f(g^+_1)$ for some increasing function $f$ vanishing at $\tfrac{1}{2}\ell^{-2}$. 
\end{thmx}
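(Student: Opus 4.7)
The plan is to analyze the ODE system \eqref{eq:B7odes} starting from the two-parameter family of local solutions in a neighbourhood of the singular orbit $S^3$, parameterised by $(f^+_1, g^+_1)$, and determine which of these extend globally to $S^3 \times \R^4$ with the appropriate decay at infinity. First, I would locate the abelian family: because the $U(1)$-factor acts freely on $P_1$, the family of abelian instantons $\kappa\, dX$ should correspond to a distinguished one-dimensional locus of initial conditions, most naturally the axis $\{f^+_1 = 0\}$, on which the $\SU(2)$-valued part of the connection vanishes and the ODE system collapses to a single linear equation for $g$. Matching the constant of integration against the asymptotic parameter $\kappa$ would cut out the admissible range of $g^+_1$; the endpoint $g^+_1 = \tfrac{1}{2}\ell^{-2}$ should correspond to the critical rate $\kappa = 1$ separating the generic exponentially-decaying abelian instantons from the distinguished polynomially-decaying one.

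For the non-abelian statement I would use a shooting argument in the parameter $f^+_1$, exploiting the $\mathbb{Z}/2$-symmetry $f \mapsto -f$ of the system to restrict to $f^+_1 > 0$. Fix $g^+_1$: local existence and smooth dependence on initial conditions hold by standard ODE theory, and the task reduces to identifying for which $f^+_1$ the resulting solution extends to infinity with bounded invariant curvature. The strategy is to construct monotone quantities along the radial flow (Lyapunov-type functionals built from $f$, $g$ and the $\Bsev$-metric coefficients) and combine them with the inequalities derived in \S \ref{section:G2define} to produce a priori bounds that prevent blow-up as long as $f^+_1$ lies in an open shooting interval. A continuity argument then yields a maximal admissible value $f(g^+_1)$; monotonicity of $f$ in $g^+_1$ and its vanishing at $\tfrac{1}{2}\ell^{-2}$ should follow by comparing solutions with different initial data via the same monotone quantities.

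The principal obstacle is the absence of an explicit form of the $\Bsev$-metric for generic $\ell > 0$: unlike the BGGG case treated in \cite{LO18b}, direct integration of the coupled ODEs is unavailable. Overcoming this demands that the Lyapunov quantities be constructed robustly, depending only on the qualitative inequalities for the metric coefficients in \S \ref{section:G2define}, so that the argument runs uniformly across the family. The delicate regime is near $g^+_1 = \tfrac{1}{2}\ell^{-2}$, where one expects $f(g^+_1) \to 0$: here I would linearise the ODE system at infinity about the critical abelian instanton and show that no bounded non-abelian mode survives, accounting both for the polynomial decay observed at this endpoint and for the rigidity highlighted in Remark~\ref{remark:LO}. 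Sharpness --- that initial conditions with $|f^+_1| > f(g^+_1)$ fail to produce global solutions --- should follow from a phase-plane argument: once the shooting curve leaves the admissible region, the same monotone quantities prevent it from re-entering, forcing either finite-time blow-up or asymptotic behaviour incompatible with a $G_2$-instanton on $S^3 \times \R^4$.
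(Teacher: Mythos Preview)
Your broad outline---shooting, comparison arguments, asymptotic analysis---is right, but the direction of the shooting is where your plan diverges from the paper, and it is precisely the point at which a forward argument runs out. You propose to fix $g_1^+$, shoot forward in $f_1^+$, and control solutions via Lyapunov functionals built from the metric coefficients. This is the approach of \cite{LO18b}, and Remark~\ref{remark:LO} records why it does not extend to the full $\Bsev$-family: the quantity playing the Lyapunov role there is the function $H$ of \eqref{eq:H}, and the forward argument needs $\sup H = \lim_{t\to\infty} H = \ell^{-2}$, equivalently that $H$ is monotone. For the generic $\Bsev$-metric this fails. Your appeal to ``robust'' Lyapunov quantities depending only on the inequalities of \S\ref{section:G2define} is exactly the step that is unavailable; you give no candidate, and none is known.

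The paper's replacement is to shoot \emph{backward from infinity}. End solutions are parameterised by $(G_\infty,\lambda)$ with $G_\infty \geq \ell^{-1}$ via a singular initial-value problem at $s = t^{-1} = 0$ (Proposition~\ref{prop:ends}). Proposition~\ref{prop:open} then shows that if an end solution with $G_\infty > \ell^{-1}$ extends back to close smoothly over the singular orbit, so does every nearby end solution; the argument uses a backward-invariant region for the rewritten system \eqref{eq:BBBGodesinitial1} and asymptotic stability of the critical point at $t=0$ in a logarithmic reparameterisation. This yields openness of $\{G_\infty > \ell^{-1}\}$ in initial-condition space. Closedness of the full complete-solution set then comes from the pairwise comparison Lemmas~\ref{lem:botright}--\ref{lem:orderoflims} (these are the monotone ingredients you anticipate, though they compare two solutions rather than bound a single one) together with a matching argument in \S\ref{sec:boundary}. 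The boundary is therefore forced to consist of solutions with $G_\infty = \ell^{-1}$, not by linearisation as you suggest, but because every $G_\infty > \ell^{-1}$ is already interior. One small correction: the abelian family exists for \emph{all} $g_1^+$ (Proposition~\ref{lem:abBGGG}); the threshold $\tfrac{1}{2}\ell^{-2}$ marks where non-abelian perturbations first appear, not where the abelian solutions themselves become admissible.
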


\begin{figure}[h]
    \begin{center}
\begin{tikzpicture}[scale=1]
    \draw[help lines, color=gray!30, dashed] (-0.5,-2.5) grid (5,2.5);

    \draw[domain=-2:2, smooth, variable=\y, ultra thick] plot ({\y*\y + 1}, \y) node[right]{$f(g^+_1)$};

    \fill[red, opacity=0.2] (-0.5,-2) -- plot[domain=-2:2, smooth, variable=\y] ({\y*\y + 1}, \y) -- (-0.5,2) -- cycle;
   \fill[green, opacity=0.2] (5, -2) -- (5, 2) -- plot[domain=2:-2, smooth, variable=\y] ({\y*\y + 1}, \y) -- cycle;

    \draw[->] (-0.5,0) -- (5.5,0) node[right] {$g_1^+$};
    \draw[->] (0,-2.5) -- (0,2.5) node[above] {$f_1^+$};
    \draw[blue, opacity=0.6, ultra thick] (-0.5,0) -- (5,0);
    \draw[dashed, opacity=0.8] (1,2.5) -- (1,-2.5);
    \filldraw[blue, opacity=0.6] (1, 0) circle (2pt) ;
    \node[right] at (1, -2.5) {$g_1^+=\tfrac{1}{2} \ell^{-2}$}; 
    \node[below] at (1, 1.5){Incomplete}; 
    \node[below] at (3, 1) {Complete}; 
    \node[below] at (4, 0) {Abelian}; 
\end{tikzpicture}
\end{center}
    \caption{The region of initial conditions that lead to complete bounded solutions of \eqref{eq:g2instantons00} on the $\Bsev$-family. The $f_1^+ = 0$ axis corresponds to a one-parameter family of abelian solutions.}
    \label{fig:boundary}
\end{figure}
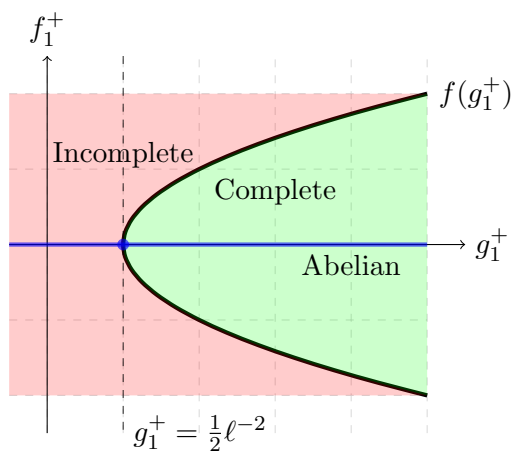

See also Theorem \ref{thm:4.3} in \S \ref{sec:propofsols}. The interior of this set corresponds to $G_2$-instantons with exponential decay to an abelian solution, while the boundary corresponds to a one-parameter family of $G_2$-instantons asymptotic to the abelian solution at $(0,\tfrac{1}{2}\ell^{-2})$ with a polynomial rate. This special abelian solution corresponds to a lift of the Killing vector-field generating the asymptotic circle.

To prove this theorem, we begin by extending some of the basic analytic results of \cite{LO18b} in \S \ref{sec:existresBGGG}, particularly the rigidity of the abelian solutions with $g^+_1< \tfrac{1}{2}\ell^{-2}$. We then describe possible asymptotics of complete solutions in \S \ref{sec:propofsols}. We find a continuous two-parameter family of end-solutions $(G_\infty, \lambda)$, $\lambda \in \R$, $G_\infty\geq \ell^{-1}$, as well as the line $(G_\infty, 0)$, $G_\infty \in \R$ corresponding to the one-parameter family of abelian solutions.

With this parametrisation, in \S \ref{sec:peturbation} we use a ``shooting from infinity'' argument inspired by \cite{FHN18}, to show that if an end solution with $G_\infty > \ell^{-1}$ can be extended backward to a complete solution, closing smoothly over the singular orbit, then so can any sufficiently close end solution. This shows both that one can deform away from abelian solutions in a one-parameter family with $G_\infty>  \ell^{-1}$ fixed, and that the set of initial conditions $(f^+_1, g^+_1)$ leading to bounded solutions with $G_\infty > \ell^{-1}$ is open. 

Finally, we prove the existence of the solutions on the boundary $G_\infty = \ell^{-1}$ in \S \ref{sec:boundary} by matching initial solutions with the solutions at infinity, using a comparison argument from \S \ref{sec:existresBGGG}. In terms of initial conditions, this boundary corresponds to the solutions with $f^+_1= f(g^+_1)$. 

\subsection*{ALF-fibrations} \label{sec:alffibration} In the final section \S \ref{sec:taubnut}, we relate our two-parameter family of solutions of \eqref{eq:g2instantons00} to a two-parameter family of $SU(2) \times U(1)$-invariant ASD instantons on the ALF Taub-NUT metric on $\R^4$, fibred along the associative $S^3$. This geometry is realised by taking an adiabatic limit of the $\Bsev$-family, scaling the metric with different rates along the base $S^3$ and the normal fibre. Moreover, we give some seemingly new explicit formulae for these limiting ASD instantons in \S \ref{sec:taubnutasd}, by finding the general solution of the $SU(2) \times U(1)$-invariant ansatz considered in \cite{Kim2000169}, c.f. \cite[\S 10.3]{cherkis:2010}.

In \S \ref{sec:adiabatic}, we prove the following result, which can be found in more detail as Theorem \ref{theorem:asdconvergance}. 

\begin{thmx} \label{theorem:B} There are parameters $f^+_1(\ell), g^+_1(\ell) \rightarrow \infty$ as $\ell \rightarrow 0$ such that the two-parameter families of $G_2$-instantons in Theorem \ref{thm:A} converge uniformly on compact subsets of $S^3 \times \R^4$ to a two-parameter family of ASD instantons on Taub-NUT, transverse to a round $S^3$.   
\end{thmx}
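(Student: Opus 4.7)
The plan is to interpret the limit $\ell \to 0$ as an adiabatic degeneration of the $\Bsev$-family in which the associative $S^3$ of the singular orbit stays of fixed size while the normal $\R^4$-fibre rescales to the Taub-NUT metric of unit mass. Concretely, I would first use the co-homogeneity one $G_2$-structure from \S \ref{section:G2define} and introduce rescaled radial and angular variables adapted to the different asymptotic scales of the base $S^3$ and the Taub-NUT fibre. Under this rescaling, the $\Bsev$-metric should converge uniformly on compact subsets of $S^3 \times \R^4$ to the round metric on the $S^3$-factor and to the Taub-NUT metric transversely, with the $U(1)$-orbit generated by $X$ limiting to the triholomorphic Killing field on Taub-NUT.

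Next, I would track the $G_2$-instanton ODE system \eqref{eq:B7odes} under this rescaling, expanding in powers of $\ell$ and showing that to leading order the system decouples into the $SU(2)\times U(1)$-invariant ASD system on Taub-NUT considered in \cite{Kim2000169} and solved explicitly in \S \ref{sec:taubnutasd}, now parametrised by points of $S^3$. The two free parameters of the family in Theorem \ref{thm:A} should correspond to the two parameters of the general ASD solution; the matching is carried out by computing how the initial conditions $(f^+_1, g^+_1)$ transform under the adiabatic rescaling. The dominant scaling should force $f^+_1(\ell), g^+_1(\ell) \to \infty$ at rates determined by the homogeneity of the rescaling, and once the correct $\ell$-dependence is identified, the rescaled initial conditions converge to prescribed initial conditions for the limiting ASD system as $\ell \to 0$.

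The final step is to promote this convergence of initial conditions to uniform convergence of solutions on compact subsets of $S^3 \times \R^4$. For this I would appeal to continuous dependence of ODE solutions on parameters, together with the a priori bounds on complete invariant $G_2$-instantons established in Theorem \ref{thm:A} and \S \ref{sec:propofsols}, which prevent blow-up of the rescaled solutions on compact sets as $\ell \to 0$. The main technical obstacle is the behaviour near the singular orbit $S^3 \times \{0\}$, where the ODE has a regular singular point and the rescaling becomes degenerate; here I would use the power series expansions of invariant solutions in terms of $(f^+_1, g^+_1)$ from \cite{LO18b} to match the singular boundary data on the $\Bsev$-side with the smooth extension of the Taub-NUT ASD instantons across the origin of the Taub-NUT $\R^4$, so that the matching occurs consistently in the adiabatic limit.
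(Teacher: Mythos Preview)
Your plan matches the paper's approach: rescale the $\Bsev$ metric with different weights along the base $S^3$ and the transverse fibre so that the Hitchin flow degenerates to the Taub-NUT ODE, then track the instanton system under the same rescaling and match initial data at the singular orbit. The paper implements this by passing to $(F^+,G^+)=(A_1 f^+,A_3 g^+)$ and reparameterising $t\mapsto \lambda^2 t$ with $\lambda=r_0$, which makes the rescaled system \eqref{eq:BBBGodesinitial1} converge directly to \eqref{eq:asdodes0}; the initial conditions become $\mu_i=\tfrac{r_0^4}{2}\cdot(f_1^+,g_1^+)$, giving the claimed blow-up rate.

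One point to adjust: the paper does not use the a priori bounds of Theorem~\ref{thm:A} to control the limit, and it is not clear those bounds are uniform as $\ell\to 0$. Instead, the paper shows that the rescaled singular initial-value problem at $t=0$ (written in the desingularised variables $\tilde F=t^{-2}F_+^\lambda$, $\tilde G=t^{-2}G_+^\lambda$) depends smoothly on $\lambda$ down to $\lambda=0$, so uniform convergence on compact sets follows from continuous dependence of the singular IVP on its parameters. Your proposed use of the power-series expansions near the singular orbit is exactly this step; the key observation is that those expansions are uniform in the adiabatic parameter, which is what replaces any appeal to Theorem~\ref{thm:A}.
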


Thanks to a recent general construction of anti-self dual instantons on ALF spaces in \cite{cherkisbows}, Theorem \ref{theorem:B} suggests another promising strategy for producing $G_2$-instantons on ALC spaces is to exploit the converse direction. That is, to lift the anti-self dual instantons produced by \cite{cherkisbows} to $G_2$-manifolds admitting such a fibration, to give $G_2$-instantons close to the adiabatic limit. In particular, since the moduli-spaces of ASD-instantons in \cite{cherkisbows} are hyperk\"{a}hler, this suggests that there are many non-invariant perturbations of the two-parameter family of instantons we have found. 

\subsection*{Acknowledgements} Special thanks to Lorenzo Foscolo, Jason Lotay, and Johannes Nordstr\"om for their helpful comments and discussions. The first author would also like to thank Derek Harland for drawing our attention to the reference \cite{etesi:2001}, and to Henrique N. S\'{a} Earp for his support of the project. Some of the formulae in \S \ref{sec:taubnutasd} were obtained by the first author in relation to a separate joint project with Lorenzo Foscolo and Calum Ross. The first author was funded by grant \#2023/02809-3, S\~{a}o Paulo Research Foundation (FAPESP), under the BRIDGES Collaboration \#2021/04065-6. The second author was funded by the EPSRC Studentship 2106787 and the Simons Collaboration on Special Holonomy in Geometry, Analysis and Physics \#488631.


\subsection*{Statements and Declarations}
The authors have no competing interests to declare that are relevant to the content of this article. 
 
\section{$G_2$ structures}\label{section:G2define}

Following \cite{salamon:riemanniangeometry} c.f. \cite{ST23}, recall that a $G_2$-structure on a 7-manifold $M$ is a reduction of the frame bundle to the exceptional Lie group $G_2$. Existence of such a reduction is equivalent to the first and second Steifel-Whitney classes of $M$ vanishing, or to the existence of a non-degenerate $3$-form $\varphi \in \Omega^3 (M)$ which is fixed by the point-wise action of $G_2 \subset GL(7,\R)$ in some framing of the tangent space at each point. Furthermore, this data determines a Riemannian metric, and orientation on $M$, by the inclusion $G_2 \subset SO(7,\R)$. 

All the $G_2$-structures we consider in this paper are \textit{torsion-free}, i.e.  $d \varphi = d^* \varphi = 0$, where the co-differential $d^*$ is defined with respect to the induced metric. If the $G_2$-structure is torsion-free, then this induced metric is Ricci-flat \cite{salamon:riemanniangeometry}. 

Moreover, suppose $M$ is complete, and admits a co-homogeneity one action of a Lie group $G$. Let $K_0$ be the isotropy subgroup of the principal orbits. If the induced  Riemannian metric is irreducible, then it follows by Ricci-flatness there is a unique \textit{singular} $G$-orbit with isotropy subgroup $K \supset K_0$. In this paper, we will encode the co-homogeneity one action in a group diagram, namely 
\[K_0\subset K\subseteq G.\]

\subsection{The $\Bsev$ family}

We set out the construction of the $\Bsev$ family, as given in \cite{FHN18}. $M=S^3\times\R^4$ admits a 1-parameter family (up to scale) of complete $\SU(2)^2\times\text{U}(1)$-invariant ALC $G_2$-metrics, where $\SU(2)^2$ acts with co-homogeneity one, with the group diagram
\[\{1\}\subset\Delta\SU(2)\subset\SU(2)\times\SU(2).\]
Denote by $e_1, e_2, e_3$, $e'_1, e'_2, e'_3$ a basis of left-invariant one-forms on $S^3 \times S^3 = \SU(2)^2$ such that   
\begin{align*}
d e_i = - e_j \wedge e_k& &d e'_i = - e'_j \wedge e'_k
\end{align*}
for $\left(i j k\right)$ any cyclic permutation of $\left( 1 2 3 \right)$. The diagonal right action of $\SU(2)$ on the space of left-invariant one-forms is via two copies of the adjoint representation $\mathfrak{su}_+(2) \oplus  \mathfrak{su}_-(2)$, where $\mathfrak{su}_\pm(2)$ is given by the linear span of the one-forms $e_i^\pm := \tfrac{1}{2} \left(e_i \pm e'_i \right)$ over $i = 1,2,3$. 

Fix constants $p,q\in\R$. Then a co-homogeneity one $G_2$-structure on the space of principal orbits $\R_{>0} \times S^3 \times S^3$ of $M$ is the closed $\G$-invariant 3-form
\begin{align}\label{eqn:inv3-form}\varphi= pe_1\wedge e_2\wedge e_3 + q e_1'\wedge e_2'\wedge e_3' + d\,(a(e_1\wedge e_1'+ e_2\wedge e_2') + b e_3\wedge e_3'),\end{align}
where $d$ is the differential in 7 dimensions.

Let $F(a,b) := 4a^2 (b-p)(b+q) - (b^2 + pq)^2$. The requirement that $\varphi$ be co-closed with respect to the induced metric gives the Hitchin flow equations:
\begin{gather}
   \label{eq:hitchFHN} \dot{x}_1=\frac{\partial_aF(y_1,y_2)}{4\sqrt{F(y_1,y_2)}}, \quad \dot{x}_2=\frac{\partial_bF(y_1,y_2)}{2\sqrt{F(y_1,y_2)}},\quad
    \dot{y}_1=\frac{x_1x_2}{\sqrt{x_1^2x_2}}, \quad \dot{y}_2=\frac{x_1^2}{\sqrt{x_1^2x_2}}
\end{gather}
for the functions $x_1=\dot{a}\dot{b},\, x_2=\dot{a}^2,\, y_1=a$ and $y_2=b$.    

Proposition 4.5(i) of \cite{FHN18} gives the existence of a local 2-parameter family of solutions to \eqref{eq:hitchFHN}, i.e. torsion-free $G_2$-structures $\varphi$ defined in a neighbourhood of the singular orbit. These can be parameterised by a triple of real numbers $r_0$, $\bar{a}$, $\bar{b}$, subject to the constraints $64r_0(2\bar{a}+\bar{b})=1$, $p=-q=r_0^3$, $r_0>0$. Suppose $t$ is the arc-length parameter of a geodesic orthogonal to the principal orbits in $M$, then these local solutions have a power-series in $t$ given by
\begin{align} \label{eq:b7family}
a(t) = r_0^3+\frac{1}{4}r_0t^2+\bar{a}t^4+O(t^6)& &b(t)=r_0^3+\frac{1}{4}r_0t^2+\bar{b}t^4+O(t^6)
\end{align} 
near the singular orbit $S^3$ at $t=0$. 

Theorem 6.16(i) of \cite{FHN18} then extends these local solutions to complete global ALC $G_2$-metrics when $\bar{a}>\bar{b}$, and Theorem 6.16(ii) of \cite{FHN18} gives the known $\SU(2)^3$-invariant AC metric on the Bryant-Salamon $S^3\times\R^4$ when $\bar{a}=\bar{b}$. In other words, we have a complete family of solutions for $\tfrac{1}{3} \leq 64 \bar{a} r_0$, and fixing scaling $r_0$ results in the aforementioned 1-parameter family of complete $\SU(2)^2\times\text{U}(1)$-invariant ALC $G_2$-metrics when $\tfrac{1}{3} < 64 \bar{a} r_0$. 

For fixed scale, this family can also be parameterised by size of the asymptotic circle fibre, 
\begin{align*}
\ell:= \lim_{t \to \infty} \sqrt[3]{\tfrac{2 b^3}{ 3 a^2}}
\end{align*} 
with the asymptotic expansions for $a$, $b$ as $t \rightarrow \infty$ given by: 
\begin{align} \label{eq:b7familyasymp}
a(t) = \tfrac{1}{18}t^{3} + O(t^2),& &b(t)=\tfrac{\ell}{6}t^{2}+ O(t).
\end{align} 
The limit $\ell \rightarrow 0$ as the size of the circle fibre collapses appears here as the limit $\bar{a}\rightarrow   \infty$. 

We have some useful inequalities for the functions $a$ and $b$:
\begin{lemma} Let $a,b$ be the solutions of \eqref{eq:hitchFHN} given by the $\Bsev$-family \eqref{eq:b7family}, parameterised by geodesic arc-length $t$. Then for all $t>0$, we have:
\begin{align} \label{eq:b7ineq}
b>p>0,& &\dot{b}>0,& &\frac{\ddot{a}}{\ddot{b}}>\frac{\dot{a}}{\dot{b}}>\frac{a}{b}>1.
\end{align}
\end{lemma}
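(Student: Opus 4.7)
The plan is to combine the Taylor data \eqref{eq:b7family} near the singular orbit with a first integral of the Hitchin flow \eqref{eq:hitchFHN}, and then run a bootstrap (open/closed) argument on a coupled ODE system in two auxiliary ratios to obtain the chain of inequalities.

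First, because $p = -q$, the polynomial $F$ factorises as $F(a,b) = (b-p)^2[4a^2 - (b+p)^2]$. A direct calculation from \eqref{eq:hitchFHN} shows that the quantity $4 x_1^2 x_2 - F(y_1, y_2)$ is preserved by the flow; since it vanishes at $t = 0$ by \eqref{eq:b7family}, this yields the conservation law
\[
4 \dot a^4 \dot b^2 \;=\; (b-p)^2\bigl[4a^2 - (b+p)^2\bigr].
\]
Non-degeneracy of the induced metric forces the right-hand side to be strictly positive for $t>0$, so both $(b-p)^2 > 0$ and $4a^2 - (b+p)^2 > 0$ there. Combined with the Taylor data $b - p \sim \tfrac14 r_0 t^2$ and $\dot b \sim \tfrac12 r_0 t$ near $t = 0^+$, continuity gives $b > p$ and $\dot b > 0$ on $(0,\infty)$; the analogous argument yields $\dot a > 0$, and $p > 0$ is immediate from $r_0 > 0$.

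For the remaining chain $\ddot a/\ddot b > \dot a/\dot b > a/b > 1$, I introduce $u := a/b$, $v := \dot a/\dot b$, and the algebraic quantity $R := [2a^2 - b(b+p)]/[a(b-p)]$. Using the explicit expressions for $\ddot a, \ddot b$ extracted from \eqref{eq:hitchFHN} together with the conservation law above, one finds the decoupled system
\[
u' = (v - u)\,\dot b / b, \qquad v' = (R - v)\cdot\frac{4 a \dot a}{4a^2 - (b+p)^2},
\]
along with the identity $R - u = (u-1)(u+1)(b+p)/[u(b-p)]$, so $R > u$ whenever $u > 1$. The full chain is thus equivalent to $1 < u < v < R$, and the Taylor expansion \eqref{eq:b7family} combined with the ALC hypothesis $\bar a > \bar b$ shows this holds on an initial interval $(0,\varepsilon)$.

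The main obstacle is closing the bootstrap, i.e.\ ruling out a first-failure point $t_1 > 0$. If $v(t_1) = u(t_1)$, then from the $v$-ODE and the $R - u$ identity, $v'(t_1) > 0$ unless $u(t_1) \leq 1$; but $u$ is strictly increasing on $(0,t_1)$ (since $v > u$ there), so $u(t_1) > u(0^+) = 1$, giving $v'(t_1) > 0$ in contradiction with $v'(t_1) \leq u'(t_1) = 0$. If instead $v(t_1) = R(t_1)$ with $u(t_1) < v(t_1)$ (hence $u(t_1) > 1$), then $v'(t_1) = 0$, so a contradiction requires $R'(t_1) > 0$; computing $\partial_a R$ and $\partial_b R$ and applying the conservation law, the inequality $R'(t_1) > 0$ reduces, after cancellation, to the already-established $a > b$ — thus closing the argument.
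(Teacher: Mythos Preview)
Your argument is correct and is genuinely different from the paper's route. The paper does not carry out a self-contained analysis: after observing that the arc-length constraint $F(a,b)=4\dot a^4\dot b^2$ forces $b>p>0$ and $\dot b>0$ (exactly as you do), it simply cites \cite[Lemma~6.4]{FHN18} for $a>b$, \cite[Proposition~6.11]{FHN18} for $\dot a\,b>\dot b\,a$, and extracts the monotonicity of $\dot a/\dot b$ from the proof of \cite[Proposition~7.6]{FHN18}. Your coupled bootstrap in the ratios $u=a/b$ and $v=\dot a/\dot b$, together with the algebraic comparison function $R$, gives an elementary and entirely internal proof; the key identity $R-u=(u-1)(u+1)(b+p)/[u(b-p)]$ and the factorisation behind the claim ``$R'(t_1)>0$ reduces to $a>b$'' (namely, at $v=R$ one finds $R'=\tfrac{\dot b}{a^3(b-p)^2}(a^2-b^2)\bigl(2a^2+(b+p)^2\bigr)$) are exactly what make the open/closed argument close. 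One small point of presentation: the inequality $R>v$ is literally equivalent to $v'>0$, i.e.\ to $\dot a/\dot b$ being strictly increasing, not to $R=\ddot a/\ddot b$; the passage from $v'>0$ to the quotient inequality $\ddot a/\ddot b>\dot a/\dot b$ uses $\ddot b>0$, which neither you nor the paper make explicit, so you may want to note it (it follows from your computations, since at $v=R$ one has $\ddot b\propto(2v-R)>0$ along the bootstrap, and this persists).
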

\begin{proof} The first two inequalities follow from the requirement that, if $t$ is the arc-length parameter, then $F(a,b)=4\dot{a}^4\dot{b}^2$. The inequalities $a>b$, $\dot{a}b> \dot{b}a$ are shown in \cite[Lemma 6.4]{FHN18}, \cite[Proposition 6.11]{FHN18} respectively. The remaining inequality follows by showing that $\tfrac{\dot{a}}{\dot{b}}$ is strictly increasing: this follows implicitly from the proof of \cite[Proposition 7.6]{FHN18}.   
\end{proof}

\subsubsection{Metric equations}

As it will be useful in \S \ref{sec:taubnut}, and to facilitate the comparison with the ODEs in \cite{LO18b}, we also consider another parametrisation of the system \eqref{eq:hitchFHN}. 

Write  functions $A_i,B_i:\R_+\to\R$ given by

\begin{align}\label{eq:LOtoFHN}
A_1=A_2 = \sqrt{\frac{(b-p)(2a-b-p)}{\dot{a}\dot{b}}},\hspace{1cm} & B_1=B_2 = \sqrt{\frac{(b-p)(2a+b+p)}{\dot{a}\dot{b}}}\\
A_3 = \frac{b-p}{\dot{a}},\hspace{1cm} & B_3 = \frac{2\dot{a}\dot{b}}{b-p} .\nonumber
\end{align}

Let $t$ be the arc-length parameter of a geodesic orthogonal to the principal orbits, and denote by $g_t$ the metric on $S^3 \times S^3$ induced by the inclusion into $M$ as the orbit $\{t\}\times S^3 \times S^3$, so that the metric on $M$ appears as $g=dt^2+g_t$. Madsen and Salamon \cite[Chapter 5]{MS12} give the following explicit formula for $g_t$:
\begin{align}
\label{eqn:metric}
g_t & = \sum_{i=1}^3 \left[ A_i^2 \left(e_i^+ \right)^2 + B_i^2 \left(e_i^- \right)^2\right].
\end{align}

The Hitchin flow equations \eqref{eq:hitchFHN} transform to ODEs for the functions $A_i,B_i$, namely
\begin{gather} \label{eq:hitchFHNmetric}
\begin{aligned}
    \dot{A}_1 & = \frac{1}{2}\left(\frac{B_1^2+B_3^2-A_1^2}{B_1B_3}-\frac{A_3}{A_1}\right), \\
    \dot{A}_3 & = \frac{1}{2}\left(\frac{A_3^2}{A_1^2}-\frac{A_3^2}{B_1^2}\right),\\
    \dot{B}_1 & = \frac{1}{2}\left(\frac{A_1^2+B_3^2-B_1^2}{A_1B_3}+\frac{A_3}{B_1}\right), \\
    \dot{B}_3 & = \frac{A_1^2+B_1^2-B_3^2}{A_1B_1}.
\end{aligned}
\end{gather}

Note that the parameter $\ell$ appears as the limit $\ell = \lim_{t \rightarrow \infty} A_3 $, and that the freedom to re-scale the resulting metric, up to re-parameterising $t \mapsto \lambda t$, appears as the symmetry 
\begin{align*}
 A_i(t) \mapsto \lambda ^{-1} A_i ( \lambda t)& & B_i(t) \mapsto \lambda ^{-1} B_i ( \lambda t )   
\end{align*}
of the ODE system \eqref{eq:hitchFHNmetric}. 

Finally, we denote $E_i$, $E_i'$, for $i =1,2,3$, as the dual basis of left-invariant vector-fields on $S^3 \times S^3$, such that $e_i(E_i) = e_i'(E_i') =1$. Up to scale, we can identify the vector-field generated by the additional $U(1)$-action on the $G_2$-structure \eqref{eqn:inv3-form} with $E_3 + E_3'$, and its metric dual $A_3^2 e^+_3$. We will see in the next section that this harmonic one-form gives a solution of \eqref{eq:g2instantons00}.

\section{$G_2$-instantons}
\label{section:G2overview} 

We recall some of the basic formula from \cite{LO18b} c.f. \cite{ST23}, used to derive the ODEs corresponding to the $G_2$-instanton equations in the following sections. First, let $\left(M^7,\varphi \right)$ be a $G_2$-manifold, equipped with a principal $G$-bundle $P \rightarrow M$ for a compact, semi-simple Lie group $G$. Recall that a connection $A$ on $P$ is called a $G_2$-\textit{instanton} if it satisfies the $G_2$-\textit{instanton equations}: 
\begin{align} \label{eq:g2instanton0}
F_A \wedge {*} \varphi = 0. 
\end{align}
Suppose that $M$ is co-homogeneity one, with principal orbit $N$, and identify the space of principal orbits with $\R_{>0} \times N$. Let $\R_{>0}$ be parameterised by arc-length $t$ of an orthogonal geodesic to $N$, so that we can write the $G_2$-structure on the space of principal orbits as: 
\begin{align*} 
\varphi = dt \wedge \omega + \Reom& &*\varphi = - dt \wedge \Imom + \tfrac{1}{2} \omega^2
\end{align*} 
for $(\omega, \Reom, \Imom)$ the $t$-dependant family of $SU(3)$-structures on $N$ defined by:
\begin{align} \label{su3define}
\omega = \iota^* \left( \partial_t \lrcorner\ \varphi \right)& &\Reom = \iota^* \varphi& &\Imom = \iota^* \left(- \partial_t \lrcorner *\varphi \right)
\end{align}
where $\iota:N \hookrightarrow M$ is the inclusion of $N$ into the space of principal orbits as $\lbrace t \rbrace \times N$.

 Written in a gauge such that $A=A_t$, the $G_2$-instanton equations \eqref{eq:g2instanton0} can be written on $N$ as the evolution equation:
\begin{subequations} \label{g2instanton}
\begin{align} 
F_{A_t} \wedge \omega^2 =0, \label{g2inst1}\\
F_{A_t} \wedge \Imom - \frac{1}{2} \partial_t A_t \wedge \omega^2 = 0. \label{g2inst2} 
\end{align}
\end{subequations}
As is shown in \cite{ST23}, the $G_2$-instanton equation \eqref{g2inst1} is preserved under evolution by \eqref{g2inst2}.  
\subsection{$G_2$-instanton Equations}
Now suppose that $M = S^3 \times \R^4$ is a member of the $\Bsev$ family: in particular, $M$ admits a co-homogeneity one action of $\G$ with principal orbits $S^3\times S^3$ and singular orbit $(S^3\times S^3)/\Delta S^3$. We now describe the possible $\G$-homogeneous $\SU(2)$-bundles on the orbits of the $\Bsev$ family. The additional U(1)-symmetry can be encoded by writing the principal orbits in the form
\[\SU(2)^2\cong\SU(2)^2\times\text{U}(1)/\Delta\text{U}(1).\]
Isomorphism classes of homogeneous $\SU(2)$-bundles on the principal orbits are in correspondence with conjugacy classes of isotropy homomorphisms $\lambda_p:\Delta\text{U}(1)\to \SU(2)$. Such classes are parameterised by $k\in\mathbb{Z}$ where
\[\lambda_p^k: e^{i\theta}\mapsto \left(\begin{array}{cc} e^{ik\theta} & 0 \\ 0 & e^{-ik\theta}\end{array}\right).\]

The singular orbit can be written in the form
\[\SU(2)^2/\Delta\SU(2)\cong (\SU(2)^2\times\text{U}(1))/(\Delta\SU(2)\times\text{U}(1)).\]
In this case, the possible isotropy homomorphisms $\Delta\SU(2)\times\text{U}(1)\to\SU(2)$ are 
\[\lambda_s^{1,k}:(g,e^{i\theta})\mapsto\left(\begin{array}{cc} e^{ik\theta} & 0 \\ 0 & e^{-ik\theta}\end{array}\right)\]
for $k\in\mathbb{Z}$, and
\[\lambda_s^{\text{id},0}:(g,e^{i\theta})\mapsto g.\]

Take the complement of $\Delta\su(2)\oplus\:\!\mathfrak{u}(1)$ to be $\mathfrak{m}=\Delta^-\su(2) \oplus\:\! 0$, where $\Delta^-\su(2)$ is the anti-diagonal copy of $\su(2)$ in $\su(2)\oplus\su(2)$. Any invariant connection on $P_s^{1,k}$ can be written as $d\lambda_s^{1,k}+\Lambda_{1,k}$, where $d\lambda_s^{1,k}=E_3\otimes kd\theta\in \su(2)\otimes T^*_eM$ is the canonical invariant connection and $\Lambda_{1,k}\in\su(2)\otimes T^*_eM$ is given by a morphism of $\Delta\SU(2)\times\text{U}(1)$-representations
\[\Lambda_{1,k}:(\mathfrak{m},\text{Ad})\to (\su(2),\text{Ad}\circ\lambda_s^{1,k}).\]
Indeed, we can extend $d\lambda_s^{1,k}+\Lambda_{1,k}$ to an $\Ad(P_s^{1,k})$-valued 1-form by left invariance; the property that $\Lambda_{1,k}$ is a morphism of representations ensures that the values lie in the adjoint bundle. Considering the splitting into irreducibles, Schur's Lemma tells us that $k=1$ for non-abelian connections and hence there are two bundles $P_1$ and $\Pid$, which are two different extensions over the singular orbit of the unique bundle on $\R^+\times\SU(2)^2$.

Following \cite{LO18b}, we can write an $SU(2)^2 \times U(1)$-invariant connection on $P_1$ or $\Pid$ over $M$ as
\begin{align}\label{eq:AB7}A= A_1f^+\Big(\sum_{i=1,2}E_i\otimes e_i^+\Big)+B_1f^-\Big(\sum_{i=1,2}E_i\otimes e_i^-\Big)+A_3g^+ E_3\otimes e_3^++B_3g^- E_3\otimes e_3^-.\end{align}
with functions $f^{\pm},g^{\pm}:\R^+\to\R$. Then the $G_2$-instanton equations \eqref{g2instanton} transform to
\begin{align}\label{eq:B7odes}
    \dot{f}^+\ \ =\ \ &
    \frac{\dot{a}(4a^2 -(b+p)^2-2a(b-p))-\dot{b}(a-b)(2a+b+p)}{(b-p)(4a^2-(b+p)^2)}f^++f^-g^--f^+g^+, \nonumber \\[0.8ex]
    \dot{f}^-\ \ =\ \ &
     \frac{\dot{a}(4a^2 -(b+p)^2+2a(b-p))+\dot{b}(a+b)(2a-b-p)}{(b-p)(4a^2-(b+p)^2)}f^-+f^+g^-+f^-g^+, \\[0.8ex]
    \dot{g}^+\ \ =\ \ &  \frac{\dot{b}(b+p)}{(4a^2-(b+p)^2)}g^++(f^-)^2-(f^+)^2,\nonumber \\[0.8ex] 
    \dot{g}^-\ \ =\ \ &-\frac{4a\dot{a}(b+p)+\dot{b}(4a^2-(b+p)^2)}{(b-p)(4a^2-(b+p)^2)}g^-+2f^+f^-.\nonumber
\end{align}

\begin{remark}
A more general set of instanton ODEs for the $SU(2)^2 \times U(1)$-invariant $G_2$-metrics of \cite{FHN18} can be found in \cite{MNT22}.
\end{remark}

\begin{remark}
Note: due to different conventions for the choice of a U(1) basis generator, the functions $f^\pm$ and  $g^\pm$ switch roles in the notation of \cite{LO18b}.
\end{remark}

The following proposition of \cite{LO18b} proves that on $P_1$, we need only consider the case where $f^-=g^-=0$.

\begin{proposition}[{\cite[Proposition 9]{LO18b}}]\label{prop:9}
Let $X\subset S^3\times\R^4$ contain the singular orbit $S^3\times\{0\}$ of the $\G$-action and be equipped with an $\G$-invariant $G_2$-metric. There is a 2-parameter family of $\G$-invariant $G_2$-instantons with gauge group $\SU(2)$ in a neighbourhood of the singular orbit in $X$ extending smoothly over $P_1$. 

Moreover, any such $G_2$-instanton can be written as in \eqref{eq:AB7} with $f^-=g^-=0$, and $f^+$, $g^+$ solving the ODEs
\begin{align}\label{eq:BGGGodes}
    \dot{f}^++\frac{1}{2}\left(\frac{A_1^2+B_3^2+B_1^2}{A_1B_1B_3}-\frac{A_3^2+2A_1^2}{A_1^2A_3}\right)f^+ & =-f^+g^+,\\
    \dot{g}^++\frac{1}{2}\left(\frac{A_3}{B_1^2}-\frac{A_3}{A_1^2}\right)g^+ & =-(f^+)^2, \nonumber
\end{align}
where $f^+=f_1^+t+O(t^3)$ and $g^+=g_1^+t+O(t^3)$ for parameters  $f_1^+,g_1^+\in\R$.
\end{proposition}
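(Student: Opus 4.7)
The plan is to proceed in three steps: (1) use the bundle structure of $P_1$ to pin down the restriction of the connection at the singular orbit; (2) observe that $\{f^- = g^- = 0\}$ is an invariant locus for the system \eqref{eq:B7odes}; (3) treat the reduced system \eqref{eq:BGGGodes} as a Cauchy problem at the regular singular point $t = 0$ and count free parameters.

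For step (1), I would note that $P_1$ is obtained by extending the unique invariant $\SU(2)$-bundle on the principal orbits across $S^3 \times \{0\}$ via the isotropy homomorphism $\lambda_s^{1,1}$. Smoothness of an $\G$-invariant connection \eqref{eq:AB7} across the singular orbit requires its tensorial part $\Lambda := A - d\lambda_s^{1,1}$ to define a morphism of $\Delta\SU(2) \times \textup{U}(1)$-representations $\mathfrak{m} \to \su(2)$, where the target carries the twisted adjoint action $\Ad \circ \lambda_s^{1,1}$. Decomposing both sides into isotypic components and applying Schur's lemma as in the discussion preceding \eqref{eq:AB7}, the components of $\Lambda$ along the $e_i^-$ and $e_3^-$ directions must vanish at the singular orbit, forcing $f^-(0) = g^-(0) = 0$. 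For step (2), direct inspection of the last two lines of \eqref{eq:B7odes} shows that every term in $\dot f^-$ and $\dot g^-$ carries an explicit factor of $f^-$ or $g^-$; hence $\{f^- = g^- = 0\}$ is invariant under the flow, and so $f^- \equiv g^- \equiv 0$ globally. Substitution then reduces \eqref{eq:B7odes} to the system \eqref{eq:BGGGodes}.

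For step (3), substituting \eqref{eq:b7family} into \eqref{eq:LOtoFHN} yields the near-origin expansions $A_1(t), A_3(t) = \tfrac{1}{2}t + O(t^3)$ and $B_1(t), B_3(t) = 2r_0 + O(t^2)$. A short computation then shows that the coefficients of $f^+$ and $g^+$ on the left-hand side of \eqref{eq:BGGGodes} both behave as $-t^{-1} + O(t)$ near $t = 0$, so $t = 0$ is a regular singular point with indicial exponent $+1$ for each equation; moreover, the coefficients depend only on even powers of $t$, so solutions naturally extend as odd functions. Changing to the rescaled unknowns $\tilde f^+ := t^{-1} f^+$ and $\tilde g^+ := t^{-1} g^+$ regularises the system into a standard ODE on $[0, \varepsilon)$ with smooth right-hand side vanishing at the origin, to which Picard iteration applies; this produces a unique real-analytic solution for each choice of initial data $(\tilde f^+(0), \tilde g^+(0)) = (f_1^+, g_1^+) \in \R^2$, giving the claimed two-parameter family with the stated asymptotics. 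I expect the main obstacle to be step (1): carefully tracking the $\Delta\SU(2) \times \textup{U}(1)$-isotypic decomposition of both $\mathfrak{m}$ and $\su(2)$ under the twist by $\lambda_s^{1,1}$, and verifying that the resulting algebraic constraint at the singular orbit is precisely $f^-(0) = g^-(0) = 0$—as opposed to a nontrivial linear relation among the four coefficients, or a vanishing pattern adapted to $\Pid$ rather than $P_1$.
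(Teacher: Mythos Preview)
The paper does not give its own proof here; the proposition is quoted from \cite[Proposition~9]{LO18b}, whose argument rests on the Eschenburg--Wang smoothness analysis in \cite[Appendix~A]{LO18b} (the present paper later invokes Lemmas~8 and~10 of that appendix). Your step~(3) for the reduced two-variable system is correct and in the same spirit.

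There is, however, a genuine gap in deducing $f^-\equiv g^-\equiv 0$ from steps~(1) and~(2). Even granting that the representation-theoretic constraint at the singular orbit gives $f^-(0)=g^-(0)=0$, and that $\{f^-=g^-=0\}$ is flow-invariant for $t>0$, you still need uniqueness for the \emph{full} four-variable singular initial value problem at $t=0$ to conclude identical vanishing; your step~(3) establishes this only for the reduced pair $(f^+,g^+)$. The equations for $f^-,g^-$ are in fact more singular: substituting \eqref{eq:b7family} into the last line of \eqref{eq:B7odes} shows that the linear coefficient of $g^-$ behaves like $-16r_0^{2}\,t^{-3}$ near $t=0$, not $t^{-1}$, so the rescaling $g^-\mapsto t^{-1}g^-$ does not regularise the system. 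What is actually required---and what \cite{LO18b} supplies---is the full set of smoothness conditions for invariant tensors in a tubular neighbourhood of $S^3\times\{0\}$: these constrain the entire Taylor jet of $(f^\pm,g^\pm)$, not merely the values at $t=0$, and for the bundle $P_1$ they force $f^-,g^-$ to vanish identically. Only then does an argument in the style of your step~(3) produce the two-parameter family. A secondary point: in step~(1) the $e_i^-$ directions span $\mathfrak m$ and are \emph{tangent} to the singular orbit, so the morphism $\Lambda$ is precisely the data carried by $f^-,g^-$ at $t=0$; the Schur-lemma constraint you describe is a constraint on the restriction to the orbit, not on the full germ, which is why it cannot by itself replace the tubular-neighbourhood analysis.
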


For the remainder of this section, we will classify $SU(2)^2 \times U(1)$-invariant $G_2$-instantons on $P_1$ with bounded curvature, by determining which of the two-parameter family $(f_1^+,g_1^+)$ of local solutions to \eqref{eq:BGGGodes} in Proposition \ref{prop:9} are bounded and defined for all forward time. We describe this subset of initial conditions as a subset of $\R^2$, and analyse properties of the corresponding solutions, including their asymptotics.

\subsection{Extending results of Lotay-Oliveira}\label{sec:existresBGGG} We begin our analysis with the low-hanging fruit: describing the results of \cite{LO18b} which extend to the whole $\Bsev$-family without much additional work, but are necessary to prove the main theorem. 

Firstly, the one-parameter family of local solutions with $f_1^+=0$ extend over the whole manifold, as reducible abelian $G_2$-instantons on $P_1$ with $f^+=0$. 
\begin{proposition}[{\cite[Proposition 2]{LO18b}}]\label{lem:abBGGG}
Abelian solutions of \eqref{eq:BGGGodes} are in a one-parameter family, extending smoothly over the singular orbit of $P_1$. These are given explicitly by: 
\begin{align} \label{eq:abelian}
f^+ =0,& &g^+ = 2g_1^+ A_3.
\end{align}
\end{proposition}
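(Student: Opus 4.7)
The statement has two parts: explicit description of the abelian family and verification that these extend smoothly over the singular orbit of $P_1$. My strategy is to reduce the instanton system to a single linear ODE and then recognise $A_3$ itself as a distinguished solution.

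First I would substitute $f^+=0$ into the ODE system \eqref{eq:BGGGodes}. The first equation becomes trivial, and the second collapses to the homogeneous linear ODE
\begin{equation*}
\dot{g}^+ + \tfrac{1}{2}\left(\tfrac{A_3}{B_1^2}-\tfrac{A_3}{A_1^2}\right)g^+ = 0.
\end{equation*}
The key observation is that the metric evolution equation for $A_3$ in \eqref{eq:hitchFHNmetric} can be rewritten as
\begin{equation*}
\dot{A}_3 = \tfrac{1}{2}\left(\tfrac{A_3}{A_1^2}-\tfrac{A_3}{B_1^2}\right)A_3,
\end{equation*}
i.e. $A_3$ itself satisfies precisely the same linear ODE as $g^+$. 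Hence by uniqueness for linear first-order ODEs, every solution is a scalar multiple $g^+ = c\, A_3$.

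To fix the constant $c$, I would match the boundary behaviour near $t=0$. Using the power series \eqref{eq:b7family} together with $p = r_0^3$, a direct computation gives $b-p = \tfrac{1}{4}r_0 t^2 + O(t^4)$ and $\dot{a} = \tfrac{1}{2}r_0 t + O(t^3)$, so from the definition \eqref{eq:LOtoFHN} one has $A_3 = (b-p)/\dot a = \tfrac{1}{2}t + O(t^3)$. Imposing the initial condition $g^+ = g_1^+ t + O(t^3)$ from Proposition \ref{prop:9} forces $c = 2g_1^+$, yielding the claimed formula \eqref{eq:abelian}.

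Finally, one must check that the resulting connection $A = A_3 g^+\, E_3 \otimes e_3^+ = 2g_1^+ A_3^2\, E_3 \otimes e_3^+$ extends smoothly across the singular orbit $S^3$ on the bundle $P_1$. Since the coefficient vanishes to order $t^2$, and $e_3^+ = \tfrac{1}{2}(e_3+e_3')$ is the left-invariant one-form dual to the direction transverse to $\Delta\mathfrak{su}(2)$ which generates the fibre of $P_1$, the extension criterion follows from the general discussion preceding Proposition \ref{prop:9} of lifts of isotropy representations to $P_1$; the main thing to verify is the matching of powers of $t$ along the collapsing directions, which is automatic from the $A_3^2 \sim t^2$ behaviour. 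The only subtle point — and probably the main obstacle worth spelling out — is making this smoothness check precise in terms of the $\Delta\SU(2)$-equivariant framing near $t=0$, but this amounts to the same analysis already carried out in \cite{LO18b} for the BGGG case, as it depends only on the common leading behaviour of $A_3$, which is insensitive to the parameter $\bar a$ distinguishing members of the $\Bsev$-family.
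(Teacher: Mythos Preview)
Your argument is correct. The paper does not supply its own proof of this proposition---it is cited from \cite{LO18b}---but the identity you exploit is precisely the one the paper records later: see \eqref{eq:B7eqcoeff}, where $\tfrac{1}{2}\bigl(\tfrac{A_3}{B_1^2}-\tfrac{A_3}{A_1^2}\bigr) = -\dot A_3/A_3$ is written down, and \eqref{eq:expBGGG}, where the second equation of \eqref{eq:BGGGodes} is recast as $\tfrac{d}{dt}(g^+/A_3) = -(f^+)^2/A_3$. Setting $f^+=0$ immediately gives $g^+/A_3$ constant, which is your argument in slightly different packaging.

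One small simplification: your final paragraph on smooth extension over the singular orbit is more work than necessary. Once you have verified $A_3 = \tfrac{1}{2}t + O(t^3)$ and hence $g^+ = g_1^+ t + O(t^3)$ with $f^+ \equiv 0$, the smooth extension over $S^3$ on $P_1$ is already contained in Proposition~\ref{prop:9}, which asserts that \emph{any} pair $(f_1^+, g_1^+)$ yields a smooth local extension. Global existence then follows simply because $A_3$ is defined for all $t\geq 0$. There is no need to revisit the equivariant framing by hand.
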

Note also that, in particular, the critical point $f^+ = g^+ = 0$ of \eqref{eq:BGGGodes} defines a flat connection. Now, since \eqref{eq:BGGGodes} defines a smooth initial-value problem away from $t=0$, we have the following corollary to Proposition \ref{lem:abBGGG}: 
\begin{corollary}[{\cite[Lemma 6]{LO18b}}]\label{lem:6}
Suppose $(f^+, g^+)$ is a solution to \eqref{eq:BGGGodes}. If the inequality $f^+(t_0)>0$ holds for some $t_0>0$, it holds for all $t>0$ for which the solution exists.
\end{corollary}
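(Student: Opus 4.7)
The plan is to exploit the linear structure of the first equation of \eqref{eq:BGGGodes} in the unknown $f^+$. Treating $g^+$ and the geometric coefficients as given (continuous) functions on any compact subinterval of the maximal interval of existence $I \subset (0,\infty)$ where the solution is defined, the first ODE of \eqref{eq:BGGGodes} can be rewritten in the form
\begin{equation*}
\dot f^+ = h(t)\,f^+, \qquad h(t) := -\tfrac{1}{2}\left(\tfrac{A_1^2+B_3^2+B_1^2}{A_1 B_1 B_3}-\tfrac{A_3^2+2A_1^2}{A_1^2 A_3}\right) - g^+(t),
\end{equation*}
which is a linear homogeneous ODE for $f^+$ with continuous coefficient $h$ on any compact subinterval $[t_0,T]\subset I$.

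The conclusion then follows from the elementary integrating-factor formula
\begin{equation*}
f^+(t) = f^+(t_0)\,\exp\!\left(\int_{t_0}^{t} h(s)\,ds\right),
\end{equation*}
valid for all $t \in I$ with $t \geq t_0$ (and likewise for $t \leq t_0$, $t\in I$). Since the exponential factor is strictly positive and $f^+(t_0)>0$, we get $f^+(t)>0$ throughout $I$.

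Equivalently, one can argue by uniqueness: the constant function $f^+ \equiv 0$ is a solution of this linear equation for any prescribed $g^+$, so if $f^+(t_1)=0$ at some $t_1 \in I$, uniqueness for the linear ODE in $f^+$ would force $f^+\equiv 0$ on $I$, contradicting $f^+(t_0)>0$. There is no genuine obstacle here: the only thing to check is that $h$ is well defined and continuous on $I$, which is immediate from smoothness of the metric coefficients $A_i,B_i$ away from the singular orbit and from the fact that $g^+$ is, by hypothesis, part of a solution on $I$. The same reasoning applied with the sign flipped gives the analogous statement for $f^+(t_0)<0$, although it is not needed for the stated corollary.
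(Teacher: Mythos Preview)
Your proof is correct. Both your argument and the paper's rest on the same observation: the set $\{f^+=0\}$ is invariant under the flow, so by uniqueness a solution with $f^+(t_0)>0$ can never touch it. The paper frames this as a corollary of Proposition~\ref{lem:abBGGG}: since abelian solutions with $f^+\equiv 0$ exist through every value of $g^+$, uniqueness for the full two-dimensional smooth initial-value problem \eqref{eq:BGGGodes} forces any solution hitting $f^+=0$ to coincide with an abelian one. You instead isolate the first equation of \eqref{eq:BGGGodes} as a scalar linear homogeneous ODE in $f^+$ (with $g^+$ and the metric coefficients treated as known continuous data), and apply the integrating-factor formula directly. Your route is marginally more elementary, as it does not require invoking the existence of the full abelian family; the paper's framing, on the other hand, emphasises the geometric origin of the invariant set.
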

Combined with the symmetry  $(f^+,g^+) \mapsto (-f^+,g^+) $ of \eqref{eq:BGGGodes}, coming from the adjoint action of imaginary number $i \in SU(2)$ on the connection \eqref{eq:AB7} by gauge transformations, Corollary \ref{lem:6} ensures that we can always restrict to the case $f^+\geq 0$ in our analysis.  

Finally, there is a unbounded region of initial conditions for which the corresponding local solutions do not extend to complete bounded solutions on $M$.

\begin{theorem}[{\cite[Theorem 8]{LO18b}}]\label{thm:8}
Let $A$ be a $\G$-invariant $G_2$-instanton on $P_1$ defined in a neighbourhood of the singular orbit of the $\Bsev$-family as given by Proposition \ref{prop:9}. If $g^+_1 \leq \frac{1}{2}\ell^{-2}$, or $f^+_1 \geq 0$ with $f^+_1 \geq g^+_1$, then $A$ extends globally to $M$ with bounded curvature if and only if $f^+_1=0$.
\end{theorem}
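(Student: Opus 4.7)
The two hypotheses are handled by separate arguments after a common reduction. By Corollary \ref{lem:6} combined with the $(f^+,g^+) \mapsto (-f^+, g^+)$ gauge symmetry of \eqref{eq:BGGGodes}, we may assume $f^+ > 0$ on its maximal interval of existence; the conclusion $f_1^+ = 0$ then isolates the abelian family of Proposition \ref{lem:abBGGG}, so the task is to show that no such non-abelian $(f^+, g^+)$ extends globally with bounded curvature under either hypothesis.

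For the case $g_1^+ \leq \tfrac{1}{2}\ell^{-2}$, the plan is to compare the non-abelian solution with the explicit abelian solution sharing the same initial value $g_1^+$. Setting $h := g^+ - 2 g_1^+ A_3$, the $g^+$-equation in \eqref{eq:BGGGodes} and the $A_3$-equation of \eqref{eq:hitchFHNmetric} combine to give
\begin{equation*}
\dot{h} \;=\; \tfrac{1}{2}\Bigl(\tfrac{A_3}{A_1^2}-\tfrac{A_3}{B_1^2}\Bigr) h \;-\; (f^+)^2,
\end{equation*}
with $h(0)=0$ and non-negative coefficient of $h$, since $A_1<B_1$ by \eqref{eq:LOtoFHN} and \eqref{eq:b7ineq}. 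An integrating-factor argument then gives $h \leq 0$, so $g^+(t) \leq 2 g_1^+ A_3(t) \leq 2 g_1^+ \ell$, the latter because $A_3$ is non-decreasing and bounded above by $\ell$ by \eqref{eq:b7familyasymp}. A direct asymptotic computation using \eqref{eq:b7familyasymp} shows the linear coefficient of $f^+$ in \eqref{eq:BGGGodes} tends to $-1/\ell$ as $t \to \infty$, so asymptotically $\dot{f}^+/f^+ \sim 1/\ell - g^+(t)$. When $g_1^+ < \tfrac{1}{2}\ell^{-2}$ the bound $g^+ \leq 2 g_1^+ \ell < 1/\ell$ yields a uniform positive lower bound on $\dot{f}^+/f^+$ for large $t$, forcing exponential growth of $f^+$ and violating bounded curvature; the borderline $g_1^+ = \tfrac{1}{2}\ell^{-2}$ follows after upgrading the comparison to the strict inequality $h<0$, valid once $f^+$ is not identically zero.

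For the case $f_1^+ \geq g_1^+$ with $f_1^+ > 0$, the plan is to propagate the inequality $f^+ \geq g^+$ forward in time and exploit the positive feedback between the two ODEs of \eqref{eq:BGGGodes}. The expansions of Proposition \ref{prop:9} give $s(t) := f^+(t) - g^+(t) = (f_1^+ - g_1^+) t + O(t^3)$, and subtracting the two equations produces an evolution equation for $s$ whose nonlinear source $f^+ \cdot s$ is non-negative on $\{f^+ \geq g^+ \geq 0\}$; controlling the remaining linear coefficients via \eqref{eq:b7ineq} and the expansions of Proposition \ref{prop:9} shows that this region is preserved in forward time. With $s \geq 0$ in hand, the second ODE forces $g^+$ to decrease monotonically since $-(f^+)^2 < 0$, and once $g^+$ becomes sufficiently small (in particular, once $g^+$ goes below zero), the first ODE gives $\dot{f}^+ \geq C f^+$ with $C$ large, producing finite-time blow-up of $f^+$ and contradicting bounded completeness. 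The main obstacle in both cases is the sharpness required at the borderline $g_1^+ = \tfrac{1}{2}\ell^{-2}$, where the exponential-growth mechanism degenerates and the comparison $h \leq 0$ must be promoted to a strict asymptotic inequality; this is the same mechanism that will later pick out the polynomial-decay solutions populating the boundary curve of Figure \ref{fig:boundary}.
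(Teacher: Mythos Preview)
Your overall strategy coincides with the paper's (which follows \cite{LO18b}): reduce to $f^+>0$, bound $g^+\le 2g_1^+A_3$ via comparison with the abelian solution in the first case, and propagate $f^+\ge g^+$ in the second. There are, however, two genuine gaps in your second case.

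First, the phrase ``controlling the remaining linear coefficients via \eqref{eq:b7ineq}'' hides precisely the one new ingredient the paper must supply. Subtracting the two equations of \eqref{eq:BGGGodes} and evaluating at $s:=f^+-g^+=0$, the linear terms combine to give
\[
\dot s\big|_{s=0}\;=\;A_3\,H\,f^+,
\]
with $H$ as in \eqref{eq:H}. Thus forward-invariance of $\{s\ge0\}$ is \emph{exactly} the inequality $H\ge 0$, and establishing this for the whole $\Bsev$-family (rather than just the explicit BGGG metric treated in \cite{LO18b}) is the sole content of the paper's proof of this theorem. It does ultimately follow from \eqref{eq:b7ineq}, but only after the explicit rewriting of $H$ in the $(a,b,p,\dot b/\dot a)$ variables carried out immediately after the statement; it is not a matter of inspection, and you should not wave it away.

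Second, your endgame once $f^+\ge g^+$ is established is incorrect: $g^+$ is \emph{not} monotonically decreasing, since $\dot g^+=(\dot A_3/A_3)\,g^+-(f^+)^2$ with $\dot A_3>0$, so in fact $\dot g^+>0$ near $t=0$ whenever $g_1^+>0$. The clean way to finish is by contradiction with Lemma~\ref{lem:bounded}: a complete bounded solution with $f^+\not\equiv0$ has $f^+\to0$ and $G_\infty\ge\ell^{-1}$, whereas $g^+\le f^+\to0$ forces $G_\infty\le0$.

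Your first case is essentially correct; the borderline $g_1^+=\tfrac12\ell^{-2}$ follows from the \emph{strict} monotonicity of $g^+/A_3$ in \eqref{eq:expBGGG}, giving $G_\infty/\ell<\ell^{-2}$ and hence again contradicting Lemma~\ref{lem:bounded}.
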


We will not repeat the full details of the proof of Theorem \ref{thm:8}, as it is unchanged from \cite{LO18b}. The only additional result needed is to show a certain smooth function $H$ of $t \in [0,\infty )$ is strictly positive for the whole $\Bsev$-family, c.f. \cite[Remark 12]{LO18b}. This function is given explicitly by:   
\begin{gather} \label{eq:H}
\begin{aligned}
    H & =\frac{(2\dot{a}^2-\dot{a}\dot{b})(4a^2-(b+p)^2)+\dot{a}\dot{b}(b-p)(2a-b-p)-4\dot{a}^2a(b-p)}{2(b-p)^2(4a^2-(b+p)^2)}\\
    & = \frac{1}{2}\left(\frac{2}{A_3^2}+\frac{1}{B_1^2}-\frac{A_1^2+B_1^2+B_3^2}{A_1A_3B_1B_3}\right).
\end{aligned}
\end{gather}
To do this, we re-write $H  =\frac{\dot{a}^2}{2(b-p)^2(4a^2-(b+p)^2)} \hat{H}$, where $\hat{H}$ is defined in terms of $a,b,p, b' := \tfrac{\dot{b}}{\dot{a}}$, as:
\[\hat{H}:= (2-b')(4a^2-(b+p)^2 - 2a (b-p))- b' (b-p)(b+p). \]
We recall some basic inequalities from \S \ref{section:G2define}: $1>b'>0$ and $a>b>p > 0$. From these, it follows immediately that $(b-p)(b+p)>0$, and $4a^2-(b+p)^2 - 2a (b-p)> 0$ by expanding out the terms. Now, since these terms are both positive, it follows from the upper bound on $b'$ that $\hat{H}> 0$, and thus $H>0$.

\subsection{Properties of solutions}\label{sec:propofsols}

In order to understand solutions to the $G_2$-instanton equations \eqref{eq:BGGGodes} and prove the main theorem, we want to study their asymptotic behaviour. To that end, we note using \S \ref{section:G2define}, we can write the coefficients in \eqref{eq:BGGGodes} as:
\begin{equation}\label{eq:B7eqcoeff}\frac{1}{2}\left(\frac{A_3}{B_1^2}-\frac{A_3}{A_1^2}\right) = - \frac{\dot{A}_3}{A_3} = \frac{-\dot{b}(b+p)}{4a^2-(b+p)^2}<0\end{equation}
which converges to 0 as $t\rightarrow \infty$, and; 
\[  \frac{1}{2} \left(\frac{A_1^2+B_3^2+B_1^2}{A_1B_1B_3}-\frac{A_3^2+2A_1^2}{A_1^2A_3} \right) = -A_3\left(\frac{\dot{A}_3}{A_3^2}+H\right)<0\]
which tends to $-\ell^{-1}$ as $t\to\infty$. Moreover, using \eqref{eq:b7familyasymp}, we have the asymptotic expansions:

\begin{align}\label{eq:BGGGasym}
    \frac{1}{2}\left(\frac{A_3}{B_1^2}-\frac{A_3}{A_1^2}\right) & = -\frac{9}{2}\ell^2t^{-3}+\gamma_1t^{-4},\\
    \frac{1}{2}\left(\frac{A_1^2+B_3^2+B_1^2}{A_1B_1B_3}-\frac{A_3^2+2A_1^2}{A_1^2A_3}\right) & = -\ell^{-1}+\frac{5}{2}t^{-1}+\gamma_2t^{-2},\nonumber
\end{align}
where $\gamma_1, \gamma_2$ are bounded functions.
\begin{lemma} \label{lem:bounded}
If $g^+(t_0)<0$ for some $t_0>0$, then $g^+(t)<0$ for all $t\geq t_0$. Moreover, if $(f^+, g^+)$ is a complete, bounded solution of \eqref{eq:BGGGodes}, then $G_{\infty}=\lim_{t\to\infty}g^+$ exists. If also $f^+ \neq 0$, then $G_{\infty}\geq \ell^{-1}$, $\lim_{t\to\infty}f^+=0$, and $g^+>0$ for all $t>0$. 
\label{lem:posf}
\end{lemma}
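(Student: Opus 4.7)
The plan hinges on a monotonicity identity. Using the simplification \eqref{eq:B7eqcoeff}, the second equation in \eqref{eq:BGGGodes} rewrites cleanly as
\begin{equation*}
\frac{d}{dt}\!\left(\frac{g^+}{A_3}\right) \;=\; -\,\frac{(f^+)^2}{A_3},
\end{equation*}
so $g^+/A_3$ is non-increasing, and strictly so whenever $f^+\not\equiv 0$. This single identity will drive almost everything that follows.

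For the first claim, I would split on whether $f^+$ is identically zero. Since the $f^+$-equation in \eqref{eq:BGGGodes} is linear in $f^+$, either $f^+\equiv 0$, placing us in the abelian case of Lemma \ref{lem:abBGGG} where $g^+=2g^+_1 A_3$ has constant sign, or $f^+$ vanishes nowhere. In the latter case, a first zero $t_1>t_0$ of $g^+$ approached from below would require $\dot g^+(t_1)\geq 0$, whereas the ODE forces $\dot g^+(t_1)=-(f^+(t_1))^2<0$. Existence of $G_\infty$ is then immediate from the monotonicity identity: boundedness of $g^+$ together with $A_3\to\ell>0$ keep $g^+/A_3$ bounded below for large $t$, so it converges to some $L\in\R$, and hence $g^+\to \ell L =: G_\infty$.

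For the third claim, I would first integrate the monotonicity identity on $[T,\infty)$ to obtain
\begin{equation*}
\int_T^\infty \frac{(f^+)^2}{A_3}\,dt \;=\; \frac{g^+(T)}{A_3(T)} - \frac{G_\infty}{\ell}\;<\;\infty,
\end{equation*}
which implies $f^+\in L^2(T,\infty)$ once $T$ is large enough for $A_3$ to be bounded above. The $f^+$-equation, combined with boundedness of $f^+$, $g^+$ and the coefficient $P$ appearing in \eqref{eq:BGGGodes}, shows that $\dot f^+$ is bounded, so $f^+$ is uniformly continuous on $[T,\infty)$; a uniformly continuous $L^2$ function on a half-line must tend to zero, giving $\lim f^+=0$.

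The hardest step will be the lower bound $G_\infty\geq \ell^{-1}$, for which I would argue by contradiction using the logarithmic derivative $\dot f^+/f^+ = -(P+g^+)$ together with the asymptotic $P\to -\ell^{-1}$ from \eqref{eq:BGGGasym}. If $G_\infty<\ell^{-1}$, then $-(P+g^+)$ is eventually bounded below by a strictly positive constant, forcing exponential growth of $|f^+|$ and contradicting boundedness. Finally, the strict positivity $g^+>0$ on $(0,\infty)$ is automatic once $G_\infty\geq \ell^{-1}$ is known: strict monotonicity of $g^+/A_3$ down to $G_\infty/\ell\geq\ell^{-2}>0$ (using $f^+\not\equiv 0$) gives $g^+/A_3>\ell^{-2}$ throughout, and $A_3>0$ for $t>0$ then gives $g^+>0$.
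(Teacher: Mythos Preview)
Your proof is correct and follows essentially the same route as the paper: the monotonicity identity $\frac{d}{dt}(g^+/A_3)=-(f^+)^2/A_3$, the $L^2$-plus-bounded-derivative argument for $f^+\to 0$, and the logarithmic derivative $\dot f^+/f^+\to \ell^{-1}-G_\infty$ to force $G_\infty\ge\ell^{-1}$ are exactly the paper's steps. Your treatment of $g^+>0$ via strict monotonicity of $g^+/A_3$ down to $G_\infty/\ell\ge\ell^{-2}$ is in fact more explicit than the paper's own proof, which leaves that final claim implicit.
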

\begin{proof}
Suppose there exists some $t_0>0$ such that $g^+(t_0)\leq0$, then
\[\dot{g}^+(t_0)=-(f^+)^2(t_0)-\frac{1}{2}\left(\frac{A_3}{B_1^2}-\frac{A_3}{A_1^2}\right)g^+(t_0)\leq0.\] 
Then $g^+$ is decreasing and so stays non-positive for all $t\geq t_0$. 

Now, we can re-write the second equation of \eqref{eq:BGGGodes} as  
\begin{align} \label{eq:expBGGG}
\frac{d}{dt} \left( \frac{g^+}{A_3} \right) = - \frac{(f^+)^2}{A_3}
\end{align}
Thus by monotonicity, if $g^+$ is bounded, then it must have a limit $G_\infty$. Since \eqref{eq:expBGGG} then implies $f^+$ must be square-integrable on $t\geq t_0$, and the first equation of \eqref{eq:BGGGodes} implies it has bounded derivative, we must also have $f^+ \rightarrow 0$ in the limit. 

The first equation also gives
\begin{equation}\label{eq:derlogg}\frac{d}{dt}\log f^+=-\frac{1}{2}\left(\frac{A_1^2+B_3^2+B_1^2}{A_1B_1B_3}-\frac{A_3^2+2A_1^2}{A_1^2A_3}\right)-g^+ \to \ell^{-1}-G_{\infty};\end{equation}
so then $f^+$ is bounded in the limit only if $\ell^{-1}-G_{\infty}\leq 0$.
\end{proof}

In addition to the first-order asymptotic behaviour described by the previous lemma, the following lemma describes the decay rate of the function $f^+$ for a complete bounded solution $(f^+,g^+)$.

\begin{lemma} \label{lem:expdecay}
Suppose $(f^+,g^+)$ is a complete bounded solution of \eqref{eq:BGGGodes}. We can write
\begin{equation}\label{eq:g+}
    f^+=\exp((\ell^{-1}-G_{\infty})t)t^{-\frac{5}{2}}h(t)
\end{equation}
for some bounded function $h$ satisfying
\begin{equation}\label{eq:h}
\dot{h}=h(G_{\infty}-g^+-\gamma_2t^{-2})=-h(k+\gamma_2)t^{-2}.
\end{equation}
Here, $\gamma_2$ is defined in \eqref{eq:BGGGasym} and we define $k$ to be the bounded function satisfying
\[g^+=G_{\infty}+t^{-2}k(t).\]
\end{lemma}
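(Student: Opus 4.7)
The plan is to derive the evolution equation for $h$ by direct substitution, then prove boundedness of both $h$ and $k$ via a bootstrap argument.

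Setting $h := f^+ \exp((G_\infty - \ell^{-1})t) t^{5/2}$ and taking the logarithmic derivative, the first ODE of \eqref{eq:BGGGodes} together with the expansion \eqref{eq:BGGGasym} produces a cancellation of the terms $\ell^{-1} - \tfrac{5}{2t}$, leaving $\dot{h}/h = G_\infty - g^+ - \gamma_2 t^{-2}$; the second form in \eqref{eq:h} follows directly from the definition of $k$. For the upper bound on $h$, the key input is that integrating \eqref{eq:expBGGG} from $t$ to infinity gives
\[ \frac{g^+(t)}{A_3(t)} - \frac{G_\infty}{\ell} = \int_t^\infty \frac{(f^+)^2}{A_3}\, ds \geq 0, \]
so $g^+ \geq A_3 G_\infty/\ell$ for all $t>0$. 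Meanwhile \eqref{eq:B7eqcoeff} and \eqref{eq:BGGGasym} give $\dot{A}_3/A_3 = O(t^{-3})$, which integrates to $1 - A_3/\ell = O(t^{-2})$. Hence $G_\infty - g^+ \leq G_\infty(1 - A_3/\ell) = O(t^{-2})$, and combined with boundedness of $\gamma_2$, this gives $\dot{h}/h \leq C t^{-2}$ for some $C>0$. Integrability on $[t_0,\infty)$ then shows $h$ is bounded above by some constant $M$.

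To close the bootstrap, rearranging the integrated form of \eqref{eq:expBGGG} yields
\[ g^+ - G_\infty = G_\infty\!\left(\frac{A_3}{\ell} - 1\right) + A_3 \int_t^\infty \frac{(f^+)^2}{A_3}\, ds. \]
The upper bound $h \leq M$ gives $(f^+)^2 \leq M^2 \exp(2(\ell^{-1} - G_\infty)t) t^{-5}$; since $G_\infty \geq \ell^{-1}$ by Lemma \ref{lem:bounded}, the integral term is $O(t^{-4})$ when $G_\infty = \ell^{-1}$ and exponentially small when $G_\infty > \ell^{-1}$. Combined with $A_3/\ell - 1 = O(t^{-2})$, this proves $|g^+ - G_\infty| = O(t^{-2})$, so $k$ is bounded. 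Consequently $\dot{h}/h = -(k+\gamma_2)t^{-2}$ is integrable, and since we may assume $f^+ > 0$ by Corollary \ref{lem:6} (so $h>0$), $\log h$ converges to a finite limit, giving a positive lower bound on $h$. The main obstacle is the circular dependence between the decay rates of $h$ and $k$; this is broken by the one-sided monotonicity estimate $g^+/A_3 \geq G_\infty/\ell$, which requires no prior bound on $h$ and suffices to produce the initial upper bound, after which the full pair of estimates closes.
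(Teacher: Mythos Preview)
Your proof is correct and follows the same overall strategy as the paper: define $h$, derive its ODE from \eqref{eq:derlogg} and \eqref{eq:BGGGasym}, and use integrability of $\dot h/h$ to conclude boundedness. The paper, however, first asserts that $k$ is bounded and then deduces boundedness of $h$, without carefully justifying why $\int_t^\infty (f^+)^2/A_3\,ds = O(t^{-2})$; you instead recognise the circular dependence between the bounds on $h$ and $k$ and break it via the one-sided monotonicity estimate $g^+/A_3 \geq G_\infty/\ell$ coming from \eqref{eq:expBGGG}, which yields $G_\infty - g^+ \leq O(t^{-2})$ with no prior decay assumption on $f^+$. This bootstrap is a genuine improvement in rigor over the paper's ``thus we see that a series expansion of $g^+$ for large $t$ is of the form\ldots'', which leaves that step implicit; the cost is only a few extra lines, and the benefit is that the argument stands on its own without appealing to an unjustified formal expansion.
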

\begin{proof}
From \eqref{eq:expBGGG}, we have that
\begin{align} \label{eq:gexpansion}
g^+(t)= A_3 \left( G_{\infty} \ell^{-1} + \int_t^{\infty}\tfrac{1}{A_3}(f^+(t))^2dt \right).   
\end{align}
The second term must vanish asymptotically since  $G_{\infty}=\lim_{t\to\infty}g^+$, thus we see that a series expansion of $g^+$ for large $t$ is of the form, for some bounded function $k$,
\begin{equation}\label{eq:f+}g^+=G_{\infty}+t^{-2}k(t).\end{equation}

Substituting this expansion into equation \eqref{eq:derlogg} and integrating yields
\[\log f^+=(\ell^{-1}-G_{\infty})t-\tfrac{5}{2}\log t+O(t^{-1}))\]
and hence we can write
\begin{equation*}
    f^+=\exp((\ell^{-1}-G_{\infty})t)t^{-\frac{5}{2}}h(t)
\end{equation*}
for some function $h$ satisfying
\begin{equation*}
\dot{h}=h(G_{\infty}-g^+-\gamma_2t^{-2})=-h(k+\gamma_2)t^{-2}.
\end{equation*}
Since $\frac{d}{dt}\log h$ is $O(t^{-2})$, hence integrable, $\log h$ decays as $t\to\infty$. Then $h$ is bounded for sufficiently large $t$.
\end{proof}

Now suppose $(f^+,g^+)$ is a solution to \eqref{eq:BGGGodes} such that $g^+\to G_{\infty}\geq \ell^{-1}$ as $t\to\infty$. Let
\[g^+_{\text{ab}}=G_{\infty}\ell^{-1} A_3,\ \ \ \ \ \ \  f^+_{\text{ab}}=0\]
be the abelian solution of Proposition \ref{lem:abBGGG} with $g^+_{\text{ab}}\to G_{\infty}$. Plugging this into \eqref{eq:gexpansion}, and using \eqref{eq:g+} on some interval $(T,\infty)$ for sufficiently large $T\in\R$, yields
\begin{equation} \label{eq:f+decay}
g^+=g^+_{\text{ab}} + o(t^{-\frac{5}{2}}\exp((\ell^{-1}-G_{\infty})t)).
\end{equation}

We now prove a couple of lemmas which compare two solutions of \eqref{eq:BGGGodes}. These lemmas will be useful when we consider a sequence of solutions whose initial conditions move in the direction of the incomplete set of Theorem \ref{thm:ic}. 
 
\begin{lemma}
\label{lem:botright}
Suppose $(\hat{f}^+,\hat{g}^+)$,  $(f^+,g^+)$ are two distinct solutions of \eqref{eq:BGGGodes} such that \[g^+\geq\hat{g}^+\hspace{1cm} \text{ and }\hspace{1cm} \hat{f}^+\geq f^+>0.\] at time $t_0>0$. Then $g^+>\hat{g}^+$ and $\hat{f}^+>f^+$ for all $t> t_0$. 
\end{lemma}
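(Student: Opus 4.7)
The plan is to pass to the differences $F := \hat{f}^+ - f^+$ and $G := g^+ - \hat{g}^+$, observe that they satisfy a linear cooperative ODE system, and then run a monotone-systems argument. First I would subtract the two copies of \eqref{eq:BGGGodes}: writing $g^+ f^+ - \hat{g}^+ \hat{f}^+ = \hat{f}^+\, G - g^+\, F$ and $(f^+)^2 - (\hat{f}^+)^2 = -(f^+ + \hat{f}^+)\,F$, this yields
\begin{align*}
\dot{F} &= -(P + g^+)\,F + \hat{f}^+\,G,\\
\dot{G} &= (f^+ + \hat{f}^+)\,F - Q\,G,
\end{align*}
where $P, Q$ denote the linear coefficients in \eqref{eq:BGGGodes}. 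By Corollary \ref{lem:6} applied to both solutions, $f^+$ and $\hat{f}^+$ remain strictly positive on $(0,\infty)$, so the two off-diagonal coefficients $\hat{f}^+$ and $f^+ + \hat{f}^+$ are strictly positive throughout; the system is therefore cooperative.

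Next I would use variation of parameters to rewrite each equation as an integral identity, e.g.
\[ F(t) = F(t_0)\,e^{-\int_{t_0}^t (P + g^+)} + \int_{t_0}^t e^{-\int_s^t (P + g^+)}\,\hat{f}^+(s)\,G(s)\,ds, \]
and the analogous formula for $G(t)$ (with $f^+ + \hat{f}^+$ in place of $\hat{f}^+$ and $Q$ in place of $P + g^+$). A continuity bootstrap on these identities shows that the non-negative quadrant $\{F \geq 0,\, G \geq 0\}$ is forward-invariant: if the first escape time $T > t_0$ were finite, then by continuity at least one of $F(T), G(T)$ vanishes; the integral representation together with the positivity of the off-diagonal coefficients then forces the other to vanish identically on $[t_0, T]$, so $(F, G) \equiv (0, 0)$ there, and uniqueness of \eqref{eq:BGGGodes} collapses the two solutions, contradicting distinctness.

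Finally I would upgrade non-negativity to strict positivity. Distinctness and uniqueness of \eqref{eq:BGGGodes} already guarantee that at least one of $F(t_0), G(t_0)$ is strictly positive. If $F(t_0) > 0$, the initial-condition term in the integral representation of $F(t)$ is already positive for every $t > t_0$; if instead $F(t_0) = 0$, then necessarily $G(t_0) > 0$, and by continuity $G(s) > 0$ on a right-neighbourhood of $t_0$, so the integral term contributes strictly and gives $F(t) > 0$ for all $t > t_0$. The symmetric argument handles $G$. The only real technical point is the \emph{global} positivity of the off-diagonal coefficients, which is precisely Corollary \ref{lem:6}; beyond that, the argument is a routine cooperative-systems maximum principle.
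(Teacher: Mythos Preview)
Your proof is correct and rests on the same mechanism as the paper's: the differences $F=\hat f^+-f^+$ and $G=g^+-\hat g^+$ satisfy a linear system whose off-diagonal couplings $\hat f^+$ and $f^++\hat f^+$ are strictly positive, so the closed non-negative quadrant is forward-invariant, and distinctness plus uniqueness upgrades this to strict positivity.

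The only difference is one of packaging. The paper does not pass to the variation-of-parameters integral representation; instead it argues directly at the boundary: if $G(t_1)=0$ with $F(t_1)>0$ then $\dot G(t_1)=(\hat f^+)^2-(f^+)^2>0$, and symmetrically if $F(t_1)=0$ with $G(t_1)>0$ then $\dot F(t_1)=\hat f^+(g^+-\hat g^+)>0$; uniqueness excludes $F=G=0$ simultaneously. Your integral-formula bootstrap is a more systematic (and perhaps slightly heavier) way to reach the same conclusion, with the advantage that the strict-positivity upgrade comes out cleanly from the integral identity rather than from an implicit ``derivative points inward'' invariance argument. Either way, the key input is exactly what you identified: global positivity of $f^+$ and $\hat f^+$ from Corollary~\ref{lem:6}.
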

\begin{proof}
Suppose that $g^+=\hat{g}^+$, and $\hat{f}^+>f^+>0$ at some time $t_1\geq t_0$. Then
\[\frac{d}{dt}(g^+-\hat{g}^+)(t_1)=(\hat{f}^+)^2(t_1)-(f^+)^2(t_1)>0.\]
On the other hand, if we have  $g^+>\hat{g}^+$, and $\hat{f}^+ = f^+>0$ at time $t_1 \geq t_0$, 
\[\frac{d}{dt}(\hat{f}^+ - f^+)(t_1)= \hat{f}^+(t_1) ( g^+(t_1)-\hat{g}^+(t_1))>0.\]
By uniqueness of solutions, there cannot be a time $t_1>t_0$ such that the inequalities $g^+>\hat{g}^+$ and $\hat{f}^+>f^+$ fail simultaneously, thus the lemma follows. 
\end{proof}

\begin{remark} The conclusion in Lemma \ref{lem:botright} also holds for distinct initial conditions satisfying the  inequalities $g_1^+\geq\hat{g}_1^+$, $\hat{f}_1^+\geq f_1^+>0$. This follows by observing that if $f_1^+=\hat{f}_1^+$, $g_1^+\neq\hat{g}_1^+$ then we have the following expansion for the corresponding solutions near $t=0$:
\begin{align*}
    f^+-\hat{f}^+= \hat{f}^+_1(\hat{g}^+_1-g^+_1) t^3 + O(t^5), 
\end{align*}
 while if instead $g_1^+=\hat{g}_1^+$, $f_1^+ \neq \hat{f}_1^+$ we have \begin{align*}
    g^+-\hat{g}^+= \tfrac{1}{2}((\hat{f}^+_1)^2-(f^+_1)^2) t^3 + O(t^5).
\end{align*}
\end{remark}

We will now prove that any solution $(f^+,g^+)$ with an initial condition satisfying the assumptions of Lemma \ref{lem:botright} for a fixed complete solution $(\hat{f}^+,\hat{g}^+)$ is itself convergent, and the limit points of these two solutions are distinct.

\begin{lemma}\label{lem:orderoflims}  
With the assumptions of Lemma \ref{lem:botright}, suppose that $(\hat{f}^+, \hat{g}^+)$ is a complete solution with $\hat{g}^+ \to \hat{G}_{\infty}$, then $(f^+,g^+)$ is also complete, and also converges with $g^+ \rightarrow G_{\infty} > \hat{G}_{\infty}$. 
\end{lemma}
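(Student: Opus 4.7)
The plan is to prove the lemma in three steps: first establish that $(f^+,g^+)$ is complete by obtaining a priori bounds, then derive convergence via a monotonicity argument, and finally upgrade the comparison $G_\infty \geq \hat G_\infty$ to the strict inequality using an integral identity.

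For completeness, I would start from Lemma \ref{lem:botright}, which propagates the initial comparison to strict inequalities $\hat f^+ > f^+ > 0$ and $g^+ > \hat g^+$ for all $t > t_0$. Applying Lemma \ref{lem:bounded} to the complete bounded solution $(\hat f^+, \hat g^+)$ yields $\hat f^+ \to 0$, $\hat g^+ > 0$ on $(0, \infty)$, and $\hat G_\infty \geq \ell^{-1}$; in particular, $\hat f^+$ is bounded, which bounds $f^+$ between $0$ and $\sup \hat f^+$. To bound $g^+$ from above, I would appeal to the structural identity \eqref{eq:expBGGG}, namely $\frac{d}{dt}(g^+/A_3) = -(f^+)^2/A_3 \leq 0$, which shows that $g^+/A_3$ is monotonically non-increasing; since $A_3$ converges to $\ell$ and is thus bounded, $g^+$ is bounded above. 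Combined with the lower bound $g^+ > \hat g^+ > 0$, this gives a uniform a priori bound on $(f^+, g^+)$ on its maximal interval of existence, forcing that interval to be all of $[t_0,\infty)$.

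With completeness in hand, convergence is immediate: $g^+/A_3$ is monotone non-increasing and bounded below by zero, hence converges, and the limit $A_3 \to \ell$ yields $g^+ \to G_\infty$ for some $G_\infty \geq 0$; Lemma \ref{lem:bounded} then upgrades this to $G_\infty \geq \ell^{-1}$. Integrating $\frac{d}{dt}(g^+/A_3) = -(f^+)^2/A_3$ over $[t_0,\infty)$ for both solutions and subtracting produces the identity
\begin{equation*}
\frac{G_\infty - \hat G_\infty}{\ell} \;=\; \frac{g^+(t_0) - \hat g^+(t_0)}{A_3(t_0)} \;+\; \int_{t_0}^\infty \frac{(\hat f^+)^2 - (f^+)^2}{A_3}\, dt .
\end{equation*}
Both terms on the right are non-negative by the hypotheses and Lemma \ref{lem:botright}. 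Since the solutions are distinct, uniqueness forces at least one of the initial inequalities $g^+(t_0) \geq \hat g^+(t_0)$, $\hat f^+(t_0) \geq f^+(t_0)$ to be strict: in the first case the boundary term is strictly positive, while in the second, Lemma \ref{lem:botright} gives $\hat f^+ > f^+$ throughout $(t_0,\infty)$, making the integrand strictly positive on a set of positive measure. Either way, $G_\infty > \hat G_\infty$.

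The main obstacle I anticipate is the completeness step, since unbounded blow-up of $g^+$ at finite time would a priori obstruct both convergence and any comparison of asymptotic limits. This is resolved cleanly by the structural monotonicity of $g^+/A_3$ from \eqref{eq:expBGGG}, which supplies an upper bound on $g^+$ without any auxiliary control on $f^+$; once combined with the upper bound $f^+ < \hat f^+$ obtained for free from Lemma \ref{lem:botright}, everything else follows by monotone convergence and the displayed integral identity.
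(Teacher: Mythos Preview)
Your proof is correct. For completeness and convergence you follow essentially the paper's argument: both rely on the monotonicity of $g^+/A_3$ from \eqref{eq:expBGGG} to rule out blow-up and to produce the limit $G_\infty$.

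The strict inequality $G_\infty > \hat G_\infty$ is where you take a genuinely different route. The paper argues by contradiction: assuming $G_\infty = \hat G_\infty$, the positive function $g^+ - \hat g^+$ must tend to zero, yet differentiating and using the ODE together with the strict inequalities from Lemma~\ref{lem:botright} shows $\tfrac{d}{dt}(g^+-\hat g^+) > 0$, which is impossible. Your approach instead integrates \eqref{eq:expBGGG} for both solutions and subtracts, obtaining the explicit identity
\[
\frac{G_\infty - \hat G_\infty}{\ell} = \frac{g^+(t_0) - \hat g^+(t_0)}{A_3(t_0)} + \int_{t_0}^\infty \frac{(\hat f^+)^2 - (f^+)^2}{A_3}\,dt,
\]
with both terms non-negative and the integral strictly positive since $\hat f^+ > f^+ > 0$ on $(t_0,\infty)$. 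The paper's monotonicity argument is marginally shorter; your integral identity has the mild advantage of being quantitative, expressing the gap $G_\infty - \hat G_\infty$ directly in terms of the data. One small simplification: your case analysis on which initial inequality is strict is unnecessary, since Lemma~\ref{lem:botright} already guarantees $\hat f^+ > f^+$ on all of $(t_0,\infty)$, making the integral strictly positive regardless.
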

\begin{proof}
Lemma \ref{lem:botright} tells us that if we start at a complete solution anywhere in the space of positive initial conditions, then choosing another set of initial conditions satisfying the inequalities of Lemma \ref{lem:botright}, we still have a complete solution with $g^+$ tending to a finite limit $G_{\infty}\geq\ell^{-1}$. We can see this by supposing that the solution $(\hat{f}^+,\hat{g}^+)$ is complete while the solution $(f^+,g^+)$ is not. Lemma \ref{lem:botright} tells us that $g^+$ must go to positive infinity in the limit. However, this is impossible, since we have that $\tfrac{g^+}{A_3}$ is strictly decreasing by \eqref{eq:expBGGG}. 

Clearly we have $\hat{G}_{\infty}\leq G_{\infty}$. Now suppose that $\hat{G}_{\infty}=G_{\infty}$; then the function $g^+-\hat{g}^+$ must be a positive function that tends to zero. From \eqref{eq:B7eqcoeff}, we have
\[\frac{d}{dt}(g^+-\hat{g}^+)=-\frac{1}{2}\left(\frac{A_3}{B_1^2}-\frac{A_3}{A_1^2}\right)(g^+-\hat{g}^+)+(\hat{f}^+-f^+)(\hat{f}^++f^+)>0\]
for all $t>0$ and so the function $g^+-\hat{g}^+$ is an increasing function. This contradicts the assumption that it tends to zero. Thus the limits must be distinct.
\end{proof}

We now state the main result which fully characterises the asymptotic behaviour of all complete bounded solutions to \eqref{eq:BGGGodes}. Recall that solutions $(f^+,g^+)$ have initial conditions $(f^+_1,g^+_1)$. 

\begin{theorem}\label{prop:bndry}
The region of initial conditions which lead to complete bounded solutions is closed. It is the union of $\{f_1^+=0,g_1^+<\frac{1}{2}\ell^{-2}\}$ and a closed set with boundary given by a graph of an increasing continuous function $g_1^+\mapsto f^+_1(g_1^+)$ and its reflection in the $g_1^+$-axis. The boundary passes through the point $(f_1^+,g_1^+)=(0,\frac{1}{2}\ell^{-2})$ corresponding to the abelian solution with $f=0, G_{\infty}=\ell^{-1}$, and corresponds to solutions with $\lim_{t\to\infty}g^+=G_{\infty}=\ell^{-1}$.
\end{theorem}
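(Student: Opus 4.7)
The plan is to combine the monotonicity lemmas of this section with the perturbation argument of \S\ref{sec:peturbation} and the boundary-matching argument of \S\ref{sec:boundary}. First I would reduce to $f_1^+ \geq 0$ using the gauge symmetry underlying Corollary \ref{lem:6}; the region for $f_1^+ \leq 0$ is then its reflection across the $g_1^+$-axis. Proposition \ref{lem:abBGGG} shows that the full abelian line $\{f_1^+ = 0\}$ consists of complete bounded solutions. Lemma \ref{lem:bounded} forces any non-abelian complete bounded solution to have $G_\infty \geq \ell^{-1}$, while Theorem \ref{thm:8} rules out those with $g_1^+ \leq \tfrac{1}{2}\ell^{-2}$ or $f_1^+ \geq g_1^+$; together these confine the non-abelian complete region $\mathcal{R}_+$ to the strip $\{g_1^+ > \tfrac{1}{2}\ell^{-2},\ 0 < f_1^+ < g_1^+\}$.

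Next I would invoke Lemmas \ref{lem:botright}, \ref{lem:orderoflims} and the remark following Lemma \ref{lem:botright} to extract the monotonicity property of $\mathcal{R}_+$: whenever $(f_1^+, g_1^+) \in \mathcal{R}_+$ and $(\tilde{f}_1^+, \tilde{g}_1^+) \neq (f_1^+, g_1^+)$ satisfies $0 < \tilde{f}_1^+ \leq f_1^+$ and $\tilde{g}_1^+ \geq g_1^+$, then $(\tilde{f}_1^+, \tilde{g}_1^+) \in \mathcal{R}_+$ with $\tilde{G}_\infty > G_\infty$. Consequently each horizontal slice of $\mathcal{R}_+$ is an interval bounded above by a non-decreasing function $f(g_1^+) \in [0, g_1^+]$, with $f(\tfrac{1}{2}\ell^{-2}) = 0$ forced by Theorem \ref{thm:8}. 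The perturbation step of \S\ref{sec:peturbation}---a shooting from infinity starting from the abelian solutions with $G_\infty > \ell^{-1}$---then simultaneously establishes $f(g_1^+) > 0$ for all $g_1^+ > \tfrac{1}{2}\ell^{-2}$ and openness of the interior $\{0 < f_1^+ < f(g_1^+)\}$.

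The hard part will be showing that the supremum $f(g_1^+)$ is actually attained and that the resulting boundary solution has $G_\infty = \ell^{-1}$; this is the content of \S\ref{sec:boundary}. The strategy there is to match local solutions at the singular orbit supplied by Proposition \ref{prop:9} with end-solutions of the $G_\infty = \ell^{-1}$ polynomial-decay type characterised in Lemma \ref{lem:expdecay}, invoking the comparison inequalities of Lemmas \ref{lem:botright} and \ref{lem:orderoflims} to force an intermediate-value matching between the two parameter families. Granting this, the identification $G_\infty = \ell^{-1}$ on the boundary follows by contradiction: if $(f(g_1^+), g_1^+)$ had $G_\infty > \ell^{-1}$, then the perturbation argument of \S\ref{sec:peturbation} applied to this solution would produce nearby complete solutions with $f_1^+ > f(g_1^+)$, violating maximality.

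Finally, strict monotonicity of $f$ follows directly from Lemma \ref{lem:orderoflims}: for $\tfrac{1}{2}\ell^{-2} \leq g_1^+ < \hat{g}_1^+$ the point $(f(g_1^+), \hat{g}_1^+)$ lies in the interior of $\mathcal{R}_+$ (with $G_\infty > \ell^{-1}$), so $f(\hat{g}_1^+) > f(g_1^+)$. Continuity of $f$ then follows because the boundary graph is closed (by the matching of \S\ref{sec:boundary}) while the interior is open (by the perturbation step), ruling out any jump discontinuity and yielding closedness of the full region $\mathcal{R}$.
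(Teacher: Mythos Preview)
Your outline is essentially the paper's own strategy: reduce to $f_1^+\geq 0$, use Theorem~\ref{thm:8} and Lemma~\ref{lem:bounded} to confine the non-abelian region, derive the monotone ``staircase'' structure from Lemmas~\ref{lem:botright} and~\ref{lem:orderoflims}, use \S\ref{sec:peturbation} for openness of the subset with $G_\infty>\ell^{-1}$, and then argue that the boundary must have $G_\infty=\ell^{-1}$ by maximality. The graph and monotonicity arguments at the end also match the paper exactly.

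The one place where your description diverges from the paper is the mechanism in \S\ref{sec:boundary}. You propose an intermediate-value matching between the local family at $t=0$ and the end-family with $G_\infty=\ell^{-1}$. The paper does something different: it proves \emph{closedness} directly by taking any monotone sequence of complete solutions $(f^+,g^+)_n$, extracting a convergent subsequence of their end-parameters $(G_\infty,\lambda)_n$, and then showing---via continuous dependence both at $t=0$ (Theorem~\ref{thm:ic}) and at $t=\infty$ (Proposition~\ref{prop:ends})---that the solution launched from the limiting initial condition and the solution launched from the limiting end condition must agree at any intermediate time $T$, hence coincide globally. In particular, the limiting $G_\infty^1$ is not assumed to equal $\ell^{-1}$; rather, closedness is established first, and only then does openness of $\{G_\infty>\ell^{-1}\}$ force the boundary value $G_\infty=\ell^{-1}$, exactly as in your contradiction step. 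Your intermediate-value idea is plausible in spirit but would require a separate argument that the $G_\infty=\ell^{-1}$ end-family actually hits the forward-extendable set; the paper sidesteps this by the sequential compactness route.
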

The proof of this theorem involves a number of steps which we now outline. 
\begin{itemize}
    \item We start by parameterising local solutions near the singular orbit by writing $s=t^{-1}$; Proposition \ref{prop:ends} fully characterises this family of solutions, namely that $g^+\to G_{\infty}$ and $f^+$ decays exponentially with rate $\ell^{-1}-G_{\infty}$ as $t\to\infty$, and the family depends continuously on both parameters.
    \item We then show that we can perturb abelian solutions with $f_1^+= 0$, $g_1^+>\frac{1}{2}\ell^{-2}$ in a two-parameter family, by extending nearby solutions in Proposition \ref{prop:ends} backward in $t$, until they extend smoothly over the singular orbit at $t=0$.
    \item In Section \ref{sec:propofsols}, we showed that as we vary the initial conditions $f_1^+$ and $g_1^+$ away from the one-parameter family of abelian solutions, the solutions satisfy inequalities outlined in Lemma \ref{lem:botright} for all time, including in the limit as $t\to\infty$. This allows us to construct a monotonic sequence of complete solutions converging to any initial conditions in the set of complete solutions. The sequence corresponds to solutions, each of which have an asymptotic limit $G_{\infty}$ for $g^+$. Hence, we also have a sequence of real numbers $(G_{\infty})_n$ that converges to some $G_{\infty}\geq\ell^{-1}$.  
    
    \item Finally we consider two solutions, the first corresponding to the limit of the sequence of initial conditions described above, and the second given by the limit of the sequence $(G_{\infty})_n$. We use the continuity result of Proposition \ref{prop:ends} to show that these solutions coincide globally. This global solution is the limit of the aforementioned sequence of solutions, and hence we show that the set of complete bounded solutions is closed, with a boundary that corresponds to solutions with $G_{\infty}=\ell^{-1}$, as shown in Figure \ref{fig:boundary}.
\end{itemize}

\begin{remark} \label{remark:LO} In  \cite[Theorem 9]{LO18b}, the existence of the perturbation in the second step can be read off from the initial conditions alone, as their argument shows that initial conditions $(f^+_1,g^+_1)$ with $g^+_1 \geq \tfrac{1}{2}\sup H +f^+_1 > \tfrac{1}{2}\sup H$ lead to complete solutions, where $H$ is given in \eqref{eq:H}, c.f. \cite[Remark 13]{LO18b}. For the explicit BBBG metric considered in \cite{LO18b}, $H$ is monotonically increasing, thus $\sup H = \lim_{t\rightarrow \infty} H = \ell^{-2}$. While the monitonicity of $H$ will hold sufficiently close to the BBBG member of the family by continuity, it fails in general: thus a more careful analysis is required here.   
\end{remark}

The first step in the proof is to parameterise the local asymptotic solutions near $s=0$. The following proposition shows that there is a 2-parameter family of such solutions; we delay the proof until Section \ref{sec:paramend}.

\begin{proposition}\label{prop:ends}
There is a 2-parameter family of smooth solutions to \eqref{eq:BGGGodes} on $(T,\infty)$ for some large $T\in\R$, parameterised by $G_{\infty}:=\lim_{t\to\infty} g^+(t)\in[\ell^{-1},\infty)$ and by $\lambda\in\R$ such that
\begin{align}\label{eq:g+end}
    f^+ & = e^{(\ell^{-1}-G_{\infty})t}\left[\lambda t^{-\frac{5}{2}}+O(t^{-\frac{7}{2}})\right].
\end{align}
The family depends continuously on compact intervals on the parameters $G_{\infty}$ and $\lambda$.
\end{proposition}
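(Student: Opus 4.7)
The plan is to reduce the ODE system to a single scalar fixed-point equation on $[T,\infty)$ for large $T$, and to solve it via a Banach contraction argument with parameters $(G_\infty,\lambda)$.

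First I would integrate the $g^+$-equation in the form \eqref{eq:expBGGG}, enforcing $g^+(t)\to G_\infty$, to obtain
\[
g^+(t) \;=\; \frac{G_\infty}{\ell}\, A_3(t) \;+\; A_3(t)\int_t^{\infty}\frac{(f^+(\tau))^2}{A_3(\tau)}\,d\tau,
\]
which is the unique primitive with the prescribed limit at infinity and makes $g^+$ an explicit functional of $f^+$ and $G_\infty$. Substituting this into the first equation of \eqref{eq:BGGGodes} and using \eqref{eq:BGGGasym}, I would renormalise by setting $h(t):= f^+(t)\,t^{5/2}\,e^{-(\ell^{-1}-G_\infty)t}$, which absorbs the dominant exponential and polynomial growth identified in \eqref{eq:derlogg}--\eqref{eq:g+}. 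A direct computation yields
\[
\frac{\dot h}{h} \;=\; -\gamma_2(t) t^{-2} \;-\; \tfrac{G_\infty}{\ell}\bigl(A_3(t) - \ell\bigr) \;-\; A_3(t)\int_t^{\infty}\frac{(f^+(\tau))^2}{A_3(\tau)}\,d\tau,
\]
and integrating from $\infty$ with boundary value $h(\infty)=\lambda$ recasts the problem as the fixed-point equation $h=\mathcal{T}_{G_\infty,\lambda}(h)$, where
\[
\mathcal{T}_{G_\infty,\lambda}(h)(t)\;:=\;\lambda\exp\!\left(\int_t^{\infty}\!\left[\gamma_2(s)s^{-2}+\tfrac{G_\infty}{\ell}\bigl(A_3(s)-\ell\bigr)+A_3(s)\!\int_s^{\infty}\!\tfrac{h(\tau)^2\tau^{-5}e^{2(\ell^{-1}-G_\infty)\tau}}{A_3(\tau)}\,d\tau\right]ds\right).
\]

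The central step is to show $\mathcal{T}_{G_\infty,\lambda}$ is a contraction on the closed ball of radius $1$ about the constant function $\lambda$ in $C_b([T,\infty))$, uniformly on compact subsets of $[\ell^{-1},\infty)\times\R$, once $T$ is chosen large. The three terms in the exponent are each integrable at infinity: $\gamma_2(s)s^{-2}$ by \eqref{eq:BGGGasym}; $A_3(s)-\ell=O(s^{-2})$, which is obtained by reading one more order of the expansions \eqref{eq:b7familyasymp} off of \eqref{eq:hitchFHNmetric}; and the double integral is $O(e^{2(\ell^{-1}-G_\infty)t})$ when $G_\infty>\ell^{-1}$ and $O(t^{-3})$ when $G_\infty=\ell^{-1}$, provided $h$ remains bounded. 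The Lipschitz constant is controlled by this double integral and can be made arbitrarily small by taking $T$ large. The Banach contraction principle then delivers a unique fixed point $h\in C_b([T,\infty))$ with $h(\infty)=\lambda$; smoothness in $t$ follows by bootstrapping the fixed-point equation, continuous dependence on $(G_\infty,\lambda)$ is immediate from the uniform contraction rate, and the stated expansion \eqref{eq:g+end} is read off from $h(t)=\lambda+O(t^{-1})$ by bounding the exponent.

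The main obstacle is the asymptotic bookkeeping for the metric coefficients — particularly establishing $A_3-\ell=O(t^{-2})$ and verifying that the remainder in \eqref{eq:BGGGasym} really is $O(t^{-2})$ with $\gamma_2$ bounded. Both follow from substituting the next order of \eqref{eq:b7familyasymp} into \eqref{eq:hitchFHNmetric}, but need to be carried out uniformly on compact parameter ranges so that the same $T$ works for all $(G_\infty,\lambda)$ under consideration. With these estimates in place, existence, uniqueness, and continuous parameter dependence all fall out of the one contraction argument.
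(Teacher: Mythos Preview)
Your proof is correct but takes a genuinely different route from the paper's. The paper changes variable to $s=t^{-1}$, writes $f^+=t^{-5/2}e^{(\ell^{-1}-G_\infty)t}X$ and $g^+=G_\infty-t^{-2}(\tfrac{9}{4}\ell^2G_\infty+Y)$, and recognises the resulting system for $(X,Y)$ as a singular initial-value problem at $s=0$ with $d_{y_0}M_{-1}=\mathrm{diag}(0,-2)$; existence, uniqueness and continuous dependence then follow in one stroke from the black-box Theorem~4.3 of \cite{FHN18}. You instead integrate out $g^+$ via \eqref{eq:expBGGG}, feed it back into the $f^+$-equation, and run a direct Banach contraction on the renormalised function $h$. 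Your approach is more elementary and self-contained (no appeal to an external singular IVP theorem), and it produces the remainder estimate $h=\lambda+O(t^{-1})$ explicitly rather than as a consequence of smoothness in $s$. The paper's approach is shorter once the abstract theorem is in hand, and it makes uniqueness within the full two-parameter family immediate, whereas your contraction a priori only gives uniqueness inside the ball; that said, the proposition as stated only asks for existence and continuous dependence, so this is not a gap. Your identification of the key estimate $A_3-\ell=O(t^{-2})$ (obtained by integrating $\dot A_3/A_3=\tfrac{9}{2}\ell^2t^{-3}+O(t^{-4})$ from \eqref{eq:BGGGasym}) is exactly what is needed, and the uniform-in-$G_\infty$ polynomial bound on the double integral (using $\tau^{-5}$ alone at the endpoint $G_\infty=\ell^{-1}$) correctly handles the transition from exponential to polynomial decay across compact parameter sets.
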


Additionally, in what follows, we will use the Theorem of Continuous Dependence on Initial Conditions on a closed interval, so we state it now.

\begin{theorem}[Continuous Dependence on Initial Conditions, Theorem 12.VI \cite{W13}]
\label{thm:ic}
Let $J$ be a compact interval with $t_0\in J$ and let $y_0(t)$ be a solution of the initial value problem
\[y'=f(t,y)\hspace{.5cm} \text{ in } J, \hspace{.5cm} y(t_0)=\eta.\]
Suppose $f(t,y)$ is Lipschitz continuous with respect to $y$ on some open set containing $(t,y_0)$ for $t\in J$.
Then the solution $y_0$ depends continuously on the initial conditions. More precisely, for every $\epsilon>0$, there is $\delta>0$ such that if 
$\vert \zeta-\eta\vert<\delta$, then every solution $z$ of the initial value problem
\[z'=f(t,z),\hspace{.5cm} z(t_0)=\zeta\]
exists in all of $J$ and satisfies
\[\vert z(t)-y_0(t)\vert<\epsilon\hspace{.5cm} \text{ in } J.\]
\end{theorem}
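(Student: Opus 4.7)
The plan is to reduce the statement to the combination of Grönwall's inequality with a continuation argument that guarantees nearby solutions exist on all of $J$. First I would use compactness of $J$ to upgrade the local Lipschitz hypothesis into a uniform one. Since the graph $\Gamma_0 = \{(t,y_0(t)) : t\in J\}$ is compact and contained in the open set where $f$ is Lipschitz in $y$, a standard tubular argument yields a radius $\rho > 0$ and a constant $L > 0$ such that the closed tube $U_\rho := \{(t,y) : t\in J,\ |y - y_0(t)|\leq \rho\}$ lies in the Lipschitz domain, with $|f(t,y_1) - f(t,y_2)| \leq L|y_1 - y_2|$ for all $(t,y_i)\in U_\rho$. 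By shrinking slightly we may also assume $f$ is bounded by some $M$ on $U_\rho$, which is needed for local existence.

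Next I would compare an arbitrary solution $z$ with $z(t_0)=\zeta$ to $y_0$. Writing $u(t) := z(t) - y_0(t)$, integration of the ODE gives
\[u(t) = (\zeta - \eta) + \int_{t_0}^{t}\bigl[f(s,z(s)) - f(s,y_0(s))\bigr]\,ds,\]
valid on any subinterval of $J$ containing $t_0$ on which $z$ exists and stays in $U_\rho$. The Lipschitz bound then yields
\[|u(t)| \leq |\zeta - \eta| + L\left|\int_{t_0}^{t} |u(s)|\,ds\right|,\]
and Grönwall's inequality produces the key estimate
\[|u(t)| \leq |\zeta - \eta|\,e^{L|t - t_0|} \leq |\zeta - \eta|\,e^{L|J|}.\]
Given $\epsilon > 0$, I would then set $\delta := \tfrac{1}{2}\min(\epsilon,\rho)\,e^{-L|J|}$, so that whenever $|\zeta - \eta| < \delta$ the bound above forces $|u(t)| < \tfrac{1}{2}\min(\epsilon,\rho)$ on any subinterval where the tube condition is maintained.

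The remaining task is to show that $z$ actually extends to all of $J$, which is the only genuinely delicate step. I would run a standard open–closed continuation argument: let $I\subset J$ be the maximal interval containing $t_0$ on which $z$ exists and satisfies $(t,z(t))\in U_\rho$. Local Picard–Lindelöf at $t_0$ ensures $I$ is nonempty; the Grönwall bound just derived ensures that $|u|$ stays strictly below $\rho$ on $I$, so $z$ never touches the lateral boundary of the tube. At an interior endpoint of $I$ one could restart local existence from $(t,z(t))\in U_\rho$ using the uniform $M$ and $L$, contradicting maximality; hence the endpoints of $I$ must coincide with those of $J$. On $J$, the Grönwall estimate then gives $|z(t) - y_0(t)| < \epsilon$, and Lipschitz continuity of $f$ in $y$ simultaneously gives uniqueness of $z$, justifying the definite article in the statement.

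The main obstacle is precisely this continuation step, since one must rule out finite-time blow-up or escape from the Lipschitz region before reaching the endpoints of $J$; the rest is a direct Grönwall computation. The compactness of $J$ is essential here, both to obtain the uniform constants $\rho$, $L$, $M$ from a pointwise hypothesis and to prevent the continuation argument from producing an accumulation point at which $z$ could fail to be defined.
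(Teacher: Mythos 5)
This statement is quoted verbatim from Walter's textbook (Theorem 12.VI of \cite{W13}); the paper supplies no proof of its own, so the only benchmark is the standard argument, and your proposal is exactly that: a uniform Lipschitz tube around the compact graph of $y_0$, a Gr\"onwall estimate, and an open--closed continuation to rule out escape from the tube before the endpoints of $J$. The argument is correct (modulo the implicit continuity of $f$ needed for the bound $M$ and for local existence, which is part of the standing hypotheses in \cite{W13}), so there is nothing to fix.
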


\subsection{Perturbing abelian solutions}\label{sec:peturbation} 

We now prove for any complete, bounded solution of \eqref{eq:BGGGodes} that has $G_{\infty}>\ell^{-1}$, is contained in an open neighbourhood of complete solutions in the parameter space. Here, the parameter space refers to either the initial conditions $(f_1^+,g_1^+)$, or the end conditions  $(G_{\infty}, \lambda)$:  by the Invariance of Domain Theorem, the mapping taking initial conditions to end conditions along complete solutions is a homeomorphism onto its image. 

This shows both that the set of initial conditions corresponding to complete solutions with $G_{\infty}>\ell^{-1}$ is open, and that we can perturb any of the one-parameter family of abelian solutions with  $G_{\infty}>\ell^{-1}$ to get a one-parameter family of non-abelian solutions with $G_\infty$ fixed. 

\begin{proposition} \label{prop:open}
Suppose that a solution of \eqref{eq:BGGGodes} lies in the family of Proposition \ref{prop:ends}, for some $(G_{\infty}, \lambda)$, $G_{\infty}>\ell^{-1}$, and can be extended backwards until it closes smoothly over the singular orbit at $t=0$, corresponding to some initial condition  $(f_1^+,g_1^+)$. Then there are homeomorphic open neighbourhoods of $(f_1^+,g_1^+)$, $(G_{\infty}, \lambda)$, such that solutions in the family with end conditions close to $(G_{\infty}, \lambda)$, can be extended backward until they close smoothly over the singular orbit at $t=0$, and correspond to initial conditions close to $(f_1^+,g_1^+)$.
\end{proposition}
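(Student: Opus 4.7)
The plan is to fix the reference complete solution $(f^+, g^+)$ of \eqref{eq:BGGGodes} corresponding to $(f_1^+, g_1^+)$ and $(G_\infty, \lambda)$ with $G_\infty > \ell^{-1}$, introduce an intermediate time $T_1 > 0$, show that both the ``shooting from the singular orbit'' map and the ``shooting from infinity'' map, evaluated at $T_1$, are local homeomorphisms onto their images in $\R^2$, and then compose their local inverses to obtain the claimed local homeomorphism between the two parameter spaces.

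First, pick $T_1$ large enough that Proposition \ref{prop:ends} applies on $(T_1/2,\infty)$, and define
\[ \mathcal{B} \colon (G_\infty', \lambda') \longmapsto \bigl( f^+(T_1),\, g^+(T_1) \bigr) \in \R^2. \]
Continuity of $\mathcal{B}$ on a neighbourhood of $(G_\infty, \lambda)$ is immediate from the continuous dependence on compact intervals given in Proposition \ref{prop:ends}. Injectivity follows because the leading-order data $(G_\infty', \lambda')$ can be recovered from the asymptotic expansion \eqref{eq:g+end} of the unique forward extension determined by the value $(f^+(T_1), g^+(T_1))$. Brouwer's Invariance of Domain then implies $\mathcal{B}$ is a local homeomorphism onto its image. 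Analogously, using the two-parameter family of smooth local solutions from Proposition \ref{prop:9} together with Theorem \ref{thm:ic} applied on the regular compact subinterval $[\epsilon, T_1] \subset (0, T_1]$ for a small $\epsilon > 0$, the forward shooting map
\[ \mathcal{F} \colon (f_1'^+, g_1'^+) \longmapsto \bigl( f^+(T_1),\, g^+(T_1) \bigr) \in \R^2 \]
is continuous near $(f_1^+, g_1^+)$. Injectivity of $\mathcal{F}$ again follows from uniqueness of the smooth local extension, and a second application of Invariance of Domain yields that $\mathcal{F}$ is a local homeomorphism onto its image.

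Since $\mathcal{F}(f_1^+, g_1^+) = \mathcal{B}(G_\infty, \lambda)$ by construction, the composition $\mathcal{F}^{-1} \circ \mathcal{B}$ is a well-defined local homeomorphism from a neighbourhood of $(G_\infty, \lambda)$ onto a neighbourhood of $(f_1^+, g_1^+)$, and any end-condition in its domain extends backward smoothly through $t=0$ by construction. The main technical obstacle will be transferring continuous dependence across the singular orbit at $t = 0$, where \eqref{eq:BGGGodes} is singular: one needs a sufficiently regular parameterisation by $(f_1'^+, g_1'^+)$ of the local smooth solutions produced in Proposition \ref{prop:9}, for instance by treating the power-series data $f^+ = f_1'^+ t + O(t^3)$, $g^+ = g_1'^+ t + O(t^3)$ as initial data for the desingularised system at $t = \epsilon > 0$ and applying Theorem \ref{thm:ic} on $[\epsilon, T_1]$, with $\epsilon \to 0$ justified by the uniform convergence of the series. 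A secondary subtlety is the role of the strict inequality $G_\infty > \ell^{-1}$: on the boundary $G_\infty = \ell^{-1}$, the decay of $f^+$ changes from exponential to polynomial and $\lambda$ ceases to parameterise an independent exponential mode, so the map $\mathcal{B}$ degenerates and the present open perturbation argument is confined to the interior $G_\infty > \ell^{-1}$, in accordance with the statement of the proposition.
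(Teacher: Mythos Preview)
Your argument is correct and takes a genuinely different, more topological route than the paper. You match end-solutions and initial-condition-solutions at a single large intermediate time $T_1$, applying Invariance of Domain once to each of the shooting maps $\mathcal{F}$ and $\mathcal{B}$ near the reference solution, and then compose. The paper instead passes to the variables $F^+ = A_1 f^+$, $G^+ = A_3 g^+$, in which the singular orbit becomes the critical point $(0,0)$, and carries out a dynamical-systems argument: it exhibits an explicit backward-invariant rectangle for the transformed flow (Lemma \ref{lemma:backwardextend}), reparametrises by $s=-\ln t$ to extract an autonomous limit with a stable node at the origin, and invokes a perturbation theorem for asymptotic systems to conclude that nearby backward-extended end-solutions are funnelled into $(0,0)$ as $t\to 0$. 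Only then does it use an open-mapping statement for the forward evaluation map, but applied near the \emph{zero} initial condition at some small time (Lemma \ref{lem:forwardextend}) rather than near the reference at a large time. What your approach buys is brevity and softness --- no invariant regions, autonomous limits, or stability analysis are needed --- at the cost of giving no quantitative picture of how the backward extensions actually approach the singular orbit; the paper's route makes that dynamical behaviour explicit. The continuity-across-$t=0$ issue you flag as the main technical obstacle is equally present in the paper's Lemma \ref{lem:forwardextend}, and in both cases is supplied by the continuous-dependence clause in the singular IVP theory of Theorem \ref{thm:4.3}.
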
 
\begin{proof} To prove this, we will re-write the system \eqref{eq:BGGGodes} for $F^+:= A_1 f^+$, $G^+ := A_3 g^+$: 
\begin{align} \label{eq:BBBGodesinitial1}
 \dot{F}^+ = \frac{F^+}{A_3}\left( 1- \frac{A_1A_3}{B_1 B_3} - G^+ \right)& &\dot{G}^+ = \frac{A_3}{A_1^2}\left( \left( 1- \frac{A_1^2}{B_1^2} \right) G^+ - (F^+)^2 \right)
\end{align}
where, using the formulae in \S \ref{section:G2define}, we have
\begin{align*}
  1- \frac{A_1A_3}{B_1 B_3} = \frac{2(a+p)}{2a + b +p},& & 1- \frac{A_1^2}{B_1^2}=\frac{2 (b+p)}{2a+b+p},& &\frac{A_3}{A_1^2} = \frac{\dot{b}}{2a-b-p}.
\end{align*}
By re-parameterising the local solutions to \eqref{eq:BGGGodes} of Proposition \ref{prop:9}, solutions to \eqref{eq:BBBGodesinitial1} extending smoothly over the singular orbit at $t=0$ are in a continuous two-parameter family $F^+ = \tfrac{1}{2} f^+_1 t^2 + O(t^4)$, $G^+ = \tfrac{1}{2} g^+_1 t^2 + O(t^4)$ in some open neighborhood $[0, t_0)$ of $t=0$. We also have a continuous two-parameter family of solutions on $\left[T, \infty \right)$ by Proposition \ref{prop:ends}. 

We first show that any solution to \eqref{eq:BBBGodesinitial1} on $(0,t_0)$ sufficiently close to the critical point $F^+ = G^+ = 0$ can actually be extended smoothly over $t=0$:  

\begin{lemma} \label{lem:forwardextend} For all $0<t<t_0$, there exists $\delta >0$ such that the map $\mathrm{ev}_t: \R^2 \rightarrow B_\delta(0)$ from the space of initial conditions $(f^+_1,g^+_1)$ to the open ball $B_\delta (0)$ of radius $\delta$ centred at the origin in $\R^2$ is surjective, where $\mathrm{ev}_t$ sends $(f^+_1,g^+_1)$ to the value $\left( F^+ (t), G^+ (t)\right)$ of the corresponding solution to \eqref{eq:BBBGodesinitial1} at time $t$. 
\end{lemma}
\begin{proof} Since this map is continuous and maps initial conditionn $f^+_1 = g^+_1 = 0$ to the critical point $(0,0)$, there must be an open ball around this point contained in the image of $\mathrm{ev}_t$.  
\end{proof}
Note that this map is also injective, by uniqueness of solutions to the singular initial-value problem \eqref{eq:BBBGodesinitial1} running forward in time.

For the rest of the proof, we will show that solutions to \eqref{eq:BBBGodesinitial1} with end conditions sufficiently close to $(G_{\infty}, \lambda)$, a-priori defined only on $[T,\infty)$, can be extended backward for all $t>0$, and converge to critical point $(0,0)$ as $t \rightarrow 0$, and thus they can be smoothly extended over $t=0$ by Lemma \ref{lem:forwardextend}. For backward existence for all $t>0$, we have the following:
\begin{lemma} \label{lemma:backwardextend}
 Fix $T_1>0$. For every $\Gamma>0$ such that $\Gamma^2 < \left. \tfrac{2}{3}\left( 1- \tfrac{A_1^2}{B_1^2} \right)\right|_{t=T_1}$, the set $\lbrace (F^+, G^+) \mid  0<G^+<\tfrac{2}{3}$, $0\leq F^+< \Gamma \rbrace$ is backward-invariant under \eqref{eq:BBBGodesinitial1} for all $0<t<T_1$.   
\end{lemma}
\begin{proof}
 Since $F^+=0$ is an invariant set, and $F^+ = G^+ = 0$ is a critical point, we will focus on the case, $F^+>0$. We have the inequalities  $\tfrac{2}{3}<\frac{2(a+p)}{2a + b +p}< 1, 0<\frac{2 (b+p)}{2a+b+p}<1$, and that  the latter is strictly decreasing.  It follows that $F^+$ is increasing on this set. Since $\dot{G}^+<0$ at $G^+=0$, and $\dot{G}^+>0$ at $G^+=\tfrac{2}{3}$, we have the result as stated. 
\end{proof}

By continuity, this implies that there is an open neighbourhood of $(G_{\infty}, \lambda)$ in the space of end parameters, such corresponding solutions exist for all backward time up to $t=0$. Moreover, this neighbourhood depends only on the solution $(G_{\infty}, \lambda)$. We now show that, up to taking a smaller neighbourhood, these solutions also converge to $F^+=G^+=0$, and hence we can apply Lemma \ref{lem:forwardextend} to extend these solutions smoothly over $t=0$.

To show convergence, we re-parameterise the system \eqref{eq:BBBGodesinitial1} by $s = - \ln t$. This gives the autonomous ODE system: 
\begin{align} \label{eq:BBBGodesinitial2}
\dot{F}^+ =  - 2\left( 1 - G^+ \right) F^+& &\dot{G}^+ = - 2\left( G^+ - (F^+)^2 \right)
\end{align}
up to terms of order $\exp (-s)$ as $s \rightarrow \infty$, uniformly in $F^+, G^+$ contained in the set of Lemma \ref{lemma:backwardextend}. Since the linearisation of the autonomous system \eqref{eq:BBBGodesinitial2} at critical point $\left(0,0\right)$ has repeated eigenvalue $-2$, this critical point is also asymptotically stable (backward in time) for the full system  \eqref{eq:BBBGodesinitial1} inside the invariant set, by \cite[Thm. 2]{markusdiffsystem}. 
\end{proof}

\subsection{Existence of a boundary of solutions} \label{sec:boundary}
We prove the third part of Theorem \ref{prop:bndry} by showing that the limit of every sequence of complete bounded solutions is also a complete bounded solution, and hence the set of complete bounded solutions is closed.

Suppose that $(f^+,g^+)_n$ is a sequence of complete bounded solutions to \eqref{eq:BGGGodes} with initial conditions $(f_1^+,g_1^+)_n$ satisfying $(g_1^+)_n > (g_1^+)_{n+1}$ and $(f_1^+)_n < (f_1^+)_{n+1}$. Denote by $(G_{\infty})_n$ the sequence whose $n^{th}$ term is $\lim_{t\to\infty}(g^+)_n$ and let $(\lambda)_n$ be the value of $\lambda$ for each $(f^+)_n$, written in the form \eqref{eq:g+end}. Then Lemma \ref{lem:orderoflims} implies that $(G_{\infty})_n>(G_{\infty})_{n+1}$ so $(G_{\infty})_n$ is decreasing and hence has a finite limit greater than or equal to $\ell^{-1}$. $(\lambda)_n$ is a sequence of positive real numbers, and we see from \eqref{eq:BGGGodes} that it must be bounded since $g^+$ and $\dot{g}^+$ are both bounded. Hence, we can pass to a convergent subsequence, which we will also call $(G_{\infty},\lambda)_n$, and only consider this sequence for the remainder of the argument.

We now compare two local solutions. The first is defined on $(T_1,\infty)$ and is the solution of Proposition \ref{prop:ends} given by $(G_{\infty}^1,\lambda^1)$ where \[G_{\infty}^1=\lim_{n\to\infty}(G_{\infty})_n = \lim_{n\to\infty}\lim_{t\to\infty}(g^+)_n \hspace{0.5cm}\text{ and }\hspace{.5cm} \lambda^1=\lim_{n\to\infty}\lim_{t\to\infty}\left( e^{((G_{\infty})_n-1)t}t^{\frac{5}{2}}f^+\right)\]
while the second is the solution $(f_{\infty}^+,g_{\infty}^+)$ of Proposition \ref{prop:9}, defined on $[0,T_2)$ and given by \[(f_1^+)_{\infty}=\lim_{n\to\infty}(f_1^+)_n \hspace{.5cm} \text{ and } \hspace{.5cm} (g_1^+)_{\infty}=\lim_{n\to\infty}(g_1^+)_n.\]

\begin{proposition}
$(f^+_{\infty}, g^+_{\infty})$ extends to a solution on $[0,\infty)$ with $\lim_{t\to\infty}g_{\infty}^+=G^1_{\infty}$.  
\end{proposition}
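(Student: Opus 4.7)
The plan is to combine two continuous-dependence results---for initial conditions at $t = 0$ (Theorem \ref{thm:ic} applied to the local family of Proposition \ref{prop:9}) and for end parameters at infinity (Proposition \ref{prop:ends})---to show that $(f^+_\infty, g^+_\infty)$ and the end solution $(\tilde{f}^+, \tilde{g}^+)$ associated to the limit parameters $(G^1_\infty, \lambda^1)$ are restrictions of a common global solution of \eqref{eq:BGGGodes}, obtained as the pointwise limit of the sequence $(f^+_n, g^+_n)$.

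Fix any $t^* > T_1$, and let $(F, G)$ denote the solution of \eqref{eq:BGGGodes} obtained by evolving the limit data $(\tilde{f}^+(t^*), \tilde{g}^+(t^*))$ backward in time from $t^*$. By Proposition \ref{prop:ends} we have $(f^+_n(t^*), g^+_n(t^*)) \to (\tilde{f}^+(t^*), \tilde{g}^+(t^*))$, so Theorem \ref{thm:ic} applied on each compact sub-interval $[\delta, t^*] \subset (0, t^*]$ gives uniform convergence $(f^+_n, g^+_n) \to (F, G)$ there, provided $(F, G)$ extends to $[\delta, t^*]$. The required uniform bounds on the sequence come from the monotonicity supplied by Lemma \ref{lem:botright} and its remark: pointwise, $g^+_n(t) \leq g^+_1(t)$ since $g^+_n(t)$ is decreasing in $n$; and a Gr\"onwall estimate applied to the $f^+$-equation on $[\delta, t^*]$, using the convergent upper bound $f^+_n(t^*) \to \tilde{f}^+(t^*)$ together with boundedness of the ODE coefficients away from $t = 0$, bounds $f^+_n$ uniformly on $[\delta, t^*]$ as well.

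Separately, by Proposition \ref{prop:9} and Theorem \ref{thm:ic}, the convergence $((f^+_1)_n, (g^+_1)_n) \to ((f^+_1)_\infty, (g^+_1)_\infty)$ yields $(f^+_n, g^+_n) \to (f^+_\infty, g^+_\infty)$ uniformly on any compact sub-interval $[0, \tau] \subset [0, T_2)$. Matching the two limits at any common time $t' \in (0, \min(T_2, t^*))$ forces $(F, G)(t') = (f^+_\infty, g^+_\infty)(t')$, and uniqueness of ODE solutions upgrades this to identity on the full common domain. Consequently $(f^+_\infty, g^+_\infty)$ extends smoothly to $[0, t^*]$; letting $t^*$ vary and applying uniqueness running forward, this extension coincides with $(\tilde{f}^+, \tilde{g}^+)$ on $(T_1, \infty)$, giving a global solution on $[0, \infty)$ with $\lim_{t \to \infty} g^+_\infty = G^1_\infty$.

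The main obstacle will be the uniform-boundedness step near the singular orbit: the linear coefficient of the $f^+$-equation has a $t^{-1}$ singularity at $t = 0$, so a single Gr\"onwall estimate from $t^*$ down to $t = 0$ is unavailable. Restricting to compact sub-intervals $[\delta, t^*]$ with $\delta > 0$ bypasses this issue, and the behaviour near $t = 0$ is handled separately through the Proposition \ref{prop:9} power-series expansion and continuous dependence on initial parameters; uniqueness then glues the two regimes into a single smooth global solution.
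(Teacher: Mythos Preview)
Your proposal is correct and follows essentially the same route as the paper: both arguments combine continuous dependence on initial data at $t=0$ (Proposition~\ref{prop:9} together with Theorem~\ref{thm:ic}) with continuous dependence on the end parameters $(G_\infty,\lambda)$ (Proposition~\ref{prop:ends}), and match the two at an intermediate time via the sequence $(f^+_n,g^+_n)$ using a triangle-inequality argument. Your write-up is somewhat more explicit than the paper's about the uniform bounds needed to propagate the backward-evolved end solution down to small $t$ (the monotonicity of $g^+_n$ in $n$ plus a Gr\"onwall bound on $f^+_n$) and about isolating the $t^{-1}$ singularity at the origin, but these are refinements of the same strategy rather than a different approach.
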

\begin{proof}
We will show that, by the Theorem \ref{thm:ic}, one can extend the interval of existence of $(f^+_{\infty}, g^+_{\infty})$ to some arbitrarily large time $T \in (T_1,\infty)$ by taking $n$ sufficiently large, and then take a limit of the sequence at fixed $T$. Explicitly, let $z_1$ be the $g^+$-component of the solution given by the end conditions $(G_{\infty}^1,\lambda^1)$ and $z_2=g_{\infty}^+$. Choose some large time $T$ that lies in the interval of existence of $z_1$.

Since the sequence $(g_1^+)_n$ of initial conditions converges, then for every $\delta>0$, there exists $N$ such that for all $n>N$, 
\[\vert (g_1^+)_n - (g_1^+)_{\infty}\vert<\delta.\]
Together with Theorem \ref{thm:ic}, for every $\epsilon>0$, there exists $\delta_1$ and hence $N_1$ such that for all $n>N_1$,
\[\vert (g_1^+)_n - (g_1^+)_{\infty}\vert<\delta_1 \implies\vert (g^+)_n(T)-(g^+)_{\infty}(T)\vert=\vert (g^+)_n(T)-z_2(T)\vert<\frac{\epsilon}{2}.\]
Since the sequence $(G_{\infty})_n$ converges, then for every $\delta>0$, there exists $N$ such that for all $n>N$, 
\[\vert (G_{\infty})_n - G_{\infty}^1\vert<\delta.\]
By continuous dependence on $G_{\infty}$ from Proposition \ref{prop:ends}, for every $\epsilon>0$, there exists $\delta_2$ and hence $N_2$ such that for all $n>N_2$,
\[\vert (G_{\infty})_n - G_{\infty}^1\vert<\delta_2\implies\vert (g^+)_n(T)-z_1(T)\vert<\frac{\epsilon}{2}.\]
Hence, for all $\epsilon>0$ and all $n>\max\{N_1,N_2\}$,
\[\vert z_1(T)-z_2(T)\vert \leq \vert z_1(T)-(g^+)_n(T)\vert + \vert (g^+)_n(T)-z_2(T)\vert < \epsilon.\]
So $z_1(T)=z_2(T)$ and hence the solutions $z_1$ and $z_2$ coincide on $[T,\infty)$. Therefore $g_{\infty}^+$ is defined on $[0,\infty)$ and $\lim_{t\to\infty}g_{\infty}^+=G_{\infty}^1$.
\end{proof}

 We have shown that if we have a sequence of complete bounded solutions whose initial conditions $(f_1^+,g_1^+)_n$ satisfy $(g_1^+)_n > (g_1^+)_{n+1}$ and $(f_1^+)_n < (f_1^+)_{n+1}$, then the limit of this sequence is also a complete bounded solution. By Lemma \ref{lem:orderoflims}, every initial condition leading to a complete, bounded solution must arise as the limit of a such a sequence, so this implies that the region of initial conditions which lead to complete bounded solutions is closed.   

By Proposition \ref{prop:open}, it follows that the boundary must correspond to solutions with $G_{\infty} = \ell^{-1}$. Finally, we claim that this boundary is a graph of the function $g_1^+\mapsto f(g_1^+)$. Suppose that there are two points on the boundary with the same value of $g_1^+$; then one has a smaller value of $f_1^+$ and hence by Lemma \ref{lem:orderoflims}, the solution has a larger value of $G_{\infty}$. This contradicts these initial conditions being on the boundary of those which lead to complete bounded solutions. So the boundary can be described as a graph of a continuous function $g_1^+\mapsto f_1^+(g_1^+)$. Hence, subject to proving Proposition \ref{prop:ends}, we have proved Theorem \ref{prop:bndry}.

\subsection{Parameterising end solutions}\label{sec:paramend}

In this section, we prove Proposition \ref{prop:ends}, a parameterisation of the end solutions, i.e. local solutions of \eqref{eq:BGGGodes} on $(T,\infty)$ for some $T\in\R$. 

    
    
\begin{proof}
Recall that, for general $G_{\infty} \in \R$, there is an abelian solution to \eqref{eq:BGGGodes} given in Proposition \ref{lem:abBGGG}, which can be deformed to a non-abelian solution only if $G_{\infty}\geq \ell$ by Lemma \ref{lem:bounded}. We set up the ODE for $G_{\infty}\geq \ell$ as a singular initial-value problem using the formulation of Section \ref{sec:existresBGGG} and, in particular, equations \eqref{eq:f+}, \eqref{eq:g+}, and \eqref{eq:BGGGasym}. Using Lemma \ref{lem:expdecay}, we can write
\[f^+(t)=t^{-\frac{5}{2}} \exp (t (\ell-G_\infty) ) X(t) \ \ \ \ \text{ and }\ \ \ \ g^+(t)=G_{\infty}-t^{-2}(\tfrac{9}{4}\ell^2G_{\infty}+Y(t))\]
for some bounded functions $(X,Y)$. Then for $s=\frac{1}{t}$, we can rewrite \eqref{eq:BGGGodes} as the $G_{\infty}$-dependent system
\begin{equation} \label{eq:ISIVP}
\begin{aligned}
    \frac{dX}{ds}&=(\gamma_2-\tfrac{9}{4}\ell^2G_{\infty}-Y)X,\\
    \frac{dY}{ds}&=-2s^{-1}Y-G_{\infty}\gamma_1-(\tfrac{81}{8}\ell^4G_{\infty}+\tfrac{9}{2}\ell^2Y+X^2 \exp(\tfrac{2 (\ell-G_\infty)}{s}))s+\gamma_1(Y+\tfrac{9}{4}\ell^2G_{\infty})s^2.
\end{aligned} 
\end{equation}
As $s \rightarrow 0$ we have,
\begin{align}
\frac{dX}{ds}= O(1),& & \frac{dY}{ds} = - 2 s^{-1} Y + O(1).    
\end{align}  
Thus we can find local solutions to \eqref{eq:ISIVP} by applying the following theorem.

\begin{theorem}[{\cite[Theorem 4.3]{FHN18}}]
\label{thm:4.3}
Consider the singular initial value problem
\[\dot{y}=\frac{1}{t}M_{-1}(y)+M(t,y),\hspace{1cm} y(0)=y_0,\]
where $y$ takes values in $\R^k$, $M_{-1}:\R^k\to\R^k$ is a smooth function of $y$ in a neighbourhood of $y_0$ and $M:\R\times\R^k\to\R^k$ is smooth in $t,y$ in a neighbourhood of $(0,y_0)$. Assume that
\begin{enumerate}
    \item[\textup{(i)}] $M_{-1}(y_0)=0$;
    \item[\textup{(ii)}] $hId-d_{y_0}M_{-1}$ is invertible $\forall\,h\in\mathbb{N}$, $h\geq 1$.
\end{enumerate}
Then there exists a unique solution $y(t)$ in a sufficiently small neighbourhood of 0. Furthermore, $y$ depends continuously on $y_0$ satisfying $(i)$ and $(ii)$.
\end{theorem}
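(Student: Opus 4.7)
The strategy is to combine a formal power series construction, made possible by hypothesis (ii), with a fixed-point argument that promotes a sufficiently high-order truncation to a genuine $C^0$ solution. Uniqueness and continuous dependence on $y_0$ will then follow from the same setup with $y_0$ treated as a parameter.

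\textbf{Formal series.} First I set $z = y - y_0$ and $L := d_{y_0} M_{-1}$, and seek a formal solution $z(t) = \sum_{h \geq 1} c_h t^h$. Substituting into the ODE and Taylor-expanding $M_{-1}$ and $M$ at $(0, y_0)$, the coefficient of $t^{h-1}$ gives a recursion of the form
\[(h\,\mathrm{Id} - L)\, c_h = P_h(c_1, \ldots, c_{h-1}),\]
where $P_h$ is a universal polynomial in the lower-order coefficients whose structure is determined by the higher Taylor coefficients of $M_{-1}$ and $M$ at $(0, y_0)$. By hypothesis (ii), $h\,\mathrm{Id} - L$ is invertible for every integer $h \geq 1$, so $c_h$ is uniquely determined by induction, giving a formal power series solution to all orders.

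\textbf{From formal to actual.} Next I fix an integer $N$ larger than the real parts of all eigenvalues of $L$, and let $P_N(t) = \sum_{h=1}^N c_h t^h$. By construction the residual $r_N(t) := \dot{P}_N - \tfrac{1}{t}M_{-1}(y_0 + P_N) - M(t, y_0 + P_N)$ is $O(t^N)$. Writing $y = y_0 + P_N + t^N w$, expanding, and dividing the resulting equation by $t^{N-1}$ produces a reduced system
\[ t\,\dot{w} \;=\; (L - N\,\mathrm{Id})\, w \;+\; g(t, w), \]
where $g$ is smooth with $g(0,0) = 0$ and $g(t,w) = O(t) + O(|w|^2)$ uniformly on bounded sets. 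Applying the integrating factor $t^{-(L - N\,\mathrm{Id})}$ recasts this as the Volterra equation
\[ w(t) \;=\; \int_0^t \bigl(\tfrac{t}{s}\bigr)^{L - N\,\mathrm{Id}} \frac{g(s, w(s))}{s}\, ds. \]
Since $N$ exceeds $\mathrm{Re}(\lambda)$ for every eigenvalue $\lambda$ of $L$, the kernel $(t/s)^{L-N\,\mathrm{Id}}$ is uniformly bounded on $\{0 < s \leq t \leq \epsilon\}$, and $g(s,w)/s$ is bounded for bounded $w$. A routine Banach fixed-point argument on a small closed ball in $C^0([0,\epsilon], \R^k)$, for $\epsilon$ small enough, produces a unique continuous $w$, and hence the sought solution $y = y_0 + P_N + t^N w$. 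Smoothness on $(0, \epsilon]$ is standard ODE regularity; smoothness (to any prescribed order) at $t=0$ follows by choosing $N$ arbitrarily large and reading off the Taylor coefficients from the formal series.

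\textbf{Uniqueness and continuous dependence.} Any two $C^0$-solutions $y_1, y_2$ must share the same formal Taylor expansion to all orders, since the recursion for $\{c_h\}$ is uniquely determined; their difference therefore vanishes to arbitrary order, and a second application of the contraction argument to the difference (with $N$ enlarged as needed) forces $y_1 = y_2$. For continuous dependence, I regard $y_0$ as an auxiliary parameter ranging over the open set where conditions (i) and (ii) both hold. Each coefficient $c_h(y_0)$ depends smoothly on $y_0$ there, so $P_N(t; y_0)$ and the integral operator defining $w$ depend continuously on $y_0$ in operator norm; standard stability of the Banach fixed point then yields continuity of $w(t; y_0)$, and hence of $y(t; y_0)$, in $y_0$. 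The main technical obstacle is the construction in the second step: verifying that after the ansatz $y = y_0 + P_N + t^N w$ the nonlinear remainder $g(t,w)/s$ and the matrix kernel $(t/s)^{L-N\,\mathrm{Id}}$ combine to give a genuine contraction. Once $N$ is chosen to dominate the spectrum of $L$, so that the kernel is uniformly bounded and the Volterra integral is non-singular, the remainder is a standard estimate.
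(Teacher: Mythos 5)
First, note that the paper does not actually prove this statement: it is quoted verbatim from [FHN18, Theorem 4.3] and used as a black box, so there is no internal proof to compare yours against. Your argument is the standard Malgrange-type proof of such results (the one underlying Eschenburg--Wang and FHN18): build the formal power series using the invertibility of $h\,\mathrm{Id}-d_{y_0}M_{-1}$, subtract a truncation of order $N$ beyond the spectrum of $L$ so that the shifted linear part $L-N\,\mathrm{Id}$ has spectrum in the open left half-plane, and close with a contraction for the resulting non-singular Volterra operator. The existence and continuous-dependence steps are correct as written (for continuous dependence one should add that $\epsilon$ and the contraction constant can be chosen locally uniformly in $y_0$, but that is immediate from your estimates).

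The genuine gap is in uniqueness. The assertion that ``any two $C^0$-solutions must share the same formal Taylor expansion to all orders'' is unjustified --- a continuous solution need not admit any expansion at $t=0$ --- and uniqueness in the class of continuous solutions is in fact \emph{false} under hypotheses (i)--(ii): take $k=1$, $M_{-1}(y)=\tfrac{1}{2}(y-y_0)$, $M=0$, so that (i) and (ii) hold since $h-\tfrac{1}{2}\neq 0$, yet $y=y_0+ct^{1/2}$ is a continuous solution with $y(0)=y_0$ for every $c$. The uniqueness claimed in the theorem must be understood within the class of solutions smooth at $t=0$ (equivalently, admitting a full asymptotic expansion, or satisfying $y-y_0-P_N=O(t^N)$ for $N$ beyond the spectrum of $L$). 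In that class your argument does close: two smooth solutions have Taylor coefficients obeying the same recursion, hence both are of the form $y_0+P_N+t^Nw$ with $w$ bounded, and since the homogeneous solutions $t^{L-N\,\mathrm{Id}}w_0$ of the reduced equation are unbounded as $t\to 0$, the bounded fixed point $w$ is unique. You should state the uniqueness class explicitly and include this last step; as written, the uniqueness paragraph proves a false statement.
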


In the case of \eqref{eq:ISIVP}, once we fix initial condition $X(0)= \lambda \in \R$, $Y(0)=0$, then we have $d_{y(0)}M_{-1}=\text{diag}(0,-2)$. Hence, for all $h\in\mathbb{N}_{\geq1}$, $h\text{Id}-d_{y(0)}M_{-1}$ is invertible, and by Theorem \ref{thm:4.3}, there exists a unique solution in a neighborhood $\left[0, \epsilon\right)$ of $s=0$. Moreover, it depends continuously on the parameters $(G_\infty, \lambda)$, and thus we have proved Proposition \ref{prop:ends}

\end{proof}

\section{Adiabatic limits: Instantons on Taub-Nut}
\label{sec:taubnut}
One can view the ALC $\mathbb{B}_7$-family as a higher-dimensional analogue of the hyperk\"{a}hler Taub-NUT metric on $\R^4$. The Taub-NUT metric is also of co-homogeneity one, and is asymptotic to a circle-fibration of fixed length over flat $\R^3$. Moreover, as we will show in this section, in a suitable adiabatic limit of the $\mathbb{B}_7$-family, the $G_2$-instantons constructed in the previous section appear as anti-self dual instantons on Taub-NUT fibred over the associative $S^3$.

First, we will briefly discuss $SU(2)\times U(1)$-invariant Yang-Mills instantons with gauge group $SU(2)$ on the Taub-NUT space. These instantons were mostly constructed in a series of papers by Chakrabarti et al. \cite{bccc4:1980, BJCC:YMexplicit5}, c.f. \cite{POPE1978424}, \cite{etesi:2001}, \cite{Kim2000169}, and can be given explicitly using ADHM data \cite{cherkis:2010}.  Here, we give some simple explicit formulae, by finding the general solution of the $SU(2) \times U(1)$-invariant\footnote{In fact, one can show a-posteriori that complete $SU(2)$-invariant solutions must be additionally $U(1)$-invariant, although we will not need this here.} ansatz considered in \cite{Kim2000169}.  

We recall some basic facts about the Taub-NUT space, following \cite{atiyah:monopoles}. The Taub-NUT space admits an isometric action by $SU(2)$ of co-homogeneity one, with isotropy subgroups $\lbrace 1 \rbrace \subset SU(2) \subseteq SU(2)$. We can write the metric, parameterised by radial geodesic arc-length $t$, as follows: 
\begin{equation}
 g =  dt^{2} + f_1^2 \left[\left(e^1\right)^2 + \left(e^2\right)^2 \right] +  f_3^2 \left(e^3\right)^2
 \end{equation}
where $e^1$, $e^2$, $e^3$ is a basis left-invariant $1$-forms on $SU(2)$ satisfying $d e^i = - e^j \wedge e^k$, for $(i j k)$ cyclic permutations of $(123)$, and $f_1(t)$, $f_3(t)$ are two functions satisfying the ODE:
\begin{align} \label{eq:ODE1}
\dot{f}_1 = \frac{1}{2}\left(2 - \frac{f_3}{f_1}\right)& &\dot{f}_3 =  \frac{1}{2}\left(\frac{f_3}{f_1}\right)^2. 
\end{align}
Complete solutions to \eqref{eq:ODE1} are parameterised by a positive constant $m$. By introducing the variable $\eta:\left[0,\infty\right) \rightarrow \left(\infty, m^{-2} \right)$, such that $dt = -\eta^{\tfrac{1}{2}}\left(\eta- \tfrac{1}{m^2}\right)^{-2} d \eta $, they are given explicitly by:
\begin{align} \label{eq:taubNUT}
f_1 =  \eta^{\tfrac{1}{2}}\left(\eta - \tfrac{1}{m^2}\right)^{-1}& &f_3 = \eta^{-\tfrac{1}{2}}.
\end{align}
The resulting model metric at infinity is a circle of radius $m$ fibred over $\R^3$. In particular, as $t\rightarrow \infty$, we have:
\begin{align} 
f_1 =  t + O(t^{-1})& &f_3 = m + O(t^{-1}).
\end{align} 
\begin{remark} As we will need it later, we note we can expand \eqref{eq:taubNUT} in $t$ near the the origin at $t=0$ as:
\begin{align}  \label{eq:taubNUTexp}
f_1 = \tfrac{1}{2} t + \tfrac{1}{24 m^2} t^3 + O(t^5)& &f_3 = \tfrac{1}{2} t - \tfrac{1}{12 m^2} t^3 + O(t^5).
\end{align}
\end{remark}
Note that the parameter $m$ can be fixed by scaling the resulting metric, which appears as the symmetry 
\begin{align*}
   f_1(t) \mapsto \lambda f_1 \left( \tfrac{t}{\lambda} \right)& &f_3(t) \mapsto \lambda f_3\left( \tfrac{t}{\lambda} \right)  
\end{align*}
of \eqref{eq:ODE1}. 

\subsection{$SU(2) \times U(1)$-invariant instantons on Taub-NUT} \label{sec:taubnutasd}
Anti-self dual (ASD) instantons are special Yang-Mills connections $A$ on a Riemannian four-manifold $(M,g)$, satisfying:
\begin{align}
    F_A = - * F_A.
\end{align}
As is shown in \cite{Kim2000169}, $SU(2) \times U(1)$-invariant anti-self dual instantons with gauge group $SU(2)$ on Taub-NUT space are given in the radial gauge by the invariant connection:
\begin{align} \label{eq:invariantconnection}
A = a_1(t) (E_1 e^1 + E_2 e^2) + a_3(t) E_3 e^3
\end{align}
with $E_i$ denoting the left-invariant vector field on $SU(2)$ which is dual to $e^i$, and $a_1, a_3$  two functions satisfying the ODE: 
\begin{align} \label{eq:asdodes0}
\dot{a}_1 = - \tfrac{1}{f_3} \left( a_3 - 1 \right) a_1 & &\dot{a}_3 = - \tfrac{f_3}{f_1^2} \left( a_1^2 - a_3 \right).
\end{align}
In their beautiful paper \cite{etesi:2001}, Etesi and Hausel find an explicit one-parameter family of solutions to \eqref{eq:asdodes0} using geometric methods, generalising the single solution of \cite{POPE1978424}, rediscovered in \cite{Kim2000169}. In fact, one can write down the two-parameter family of general solutions of \eqref{eq:asdodes0}, by noticing that the system \eqref{eq:asdodes0} has a  conserved quantity: 
\begin{align}
(\eta-m^{-2})^2 \tfrac{d}{d\eta}  (\eta a_3)- (\eta a_3 -m^{-2})^2.
\end{align}
Integrating this out, and solving the resulting separable ODE, gives the full two-parameter family of complete solutions to \eqref{eq:asdodes0}, 
    \begin{align} \label{eq:asdsoln1}
a_1 = \tfrac{C}{\eta - m^{-2}} \mathrm{csch} \left(\tfrac{C}{\eta - m^{-2}} + D \right)& &a_3 = \tfrac{1}{\eta}\left( m^{-2} +  C \coth \left(\tfrac{C}{\eta - m^{-2}} + D\right) \right)
\end{align}
for non-negative constants $C, D$. For each $C\geq0$, these solutions are asymptotic to the one-parameter family of reducible abelian solutions of  \eqref{eq:asdodes0}: 
\begin{align} \label{eq:asdsoln0}
a_1 = 0& &a_3=\tfrac{1}{\eta}\left( m^{-2} +  C \right).   
\end{align}
Note that we are free to take $C<0$ for abelian solutions, but these do not admit non-abelian perturbations as solutions of \eqref{eq:asdodes0}. 

We can recover the solutions of \cite{etesi:2001} from the family \eqref{eq:asdsoln1}, by taking the limit $C \rightarrow 0$, $D \rightarrow 0$ such that $B:=\sinh(D)/C \geq 0$ is held fixed. In this limit, we obtain the one-parameter family of solutions to \eqref{eq:asdodes0}: 
\begin{align} \label{eq:asdsoln2}
a_1 = \tfrac{1}{1+ B (\eta - m^{-2})}& &a_3 = \tfrac{1}{\eta}\left( m^{-2} +   \tfrac{ (\eta -m^{-2})}{1+ B (\eta -m^{-2})}\right)
\end{align} 
asymptotic to the flat Maurer-Cartan form $a_1=a_3=1$. 

Finally, we claim that the solutions \eqref{eq:asdsoln0}, \eqref{eq:asdsoln2}, and \eqref{eq:asdsoln1} for $D>0$, all define smooth connections on the bundle of anti-self-dual forms over Taub-NUT, while \eqref{eq:asdsoln1} gives a one-parameter family of connections on the bundle of self-dual forms when $D=0$. Respectively, these $SU(2)$-equivariant bundles correspond to the trivial and the adjoint action of the isometric $SU(2)$ on its lie algebra, so this claim can be checked by a routine computation, which we will not repeat here, see e.g. \cite[Appendix A]{stein:calabiyaugauge}. 

\begin{remark} For later reference, we note that we can expand \eqref{eq:asdsoln1} with $D>0$ near  $t=0$ as:
\begin{align} \label{eq:asdsols}
a_1 = \mu_1 t^2 + O(t^4) & &a_3 =\mu_3 t^2 + O(t^4) 
\end{align}
where $\mu_1= \tfrac{C}{4 m^2} \mathrm{csch} (D)$, $\mu_3= \tfrac{C}{4m^2} \mathrm{coth} (D) + \tfrac{1}{m^2}$ are subject to the constraint
\begin{align*}
 \left(\mu_3 - \tfrac{1}{4m^2}\right)^2 - \mu_1^2  = \left( \tfrac{C}{4m^2} \right)^2.  
\end{align*} 
\end{remark}

\subsection{Adiabatic limit of $\Bsev$-family.} \label{sec:adiabatic}

We will now show that we can recover the ASD instantons \eqref{eq:asdsoln1} by considering sequences of complete solutions to $G_2$-instanton equations \eqref{eq:BGGGodes} in an adiabatic limit, as the $\Bsev$-family of metrics approaches a rescaled copy of Taub-NUT, fibred along a round $S^3$. 

Firstly, we show that by keeping the freedom to vary the overall scale of the metric, while fixing the size of the singular orbit $S^3$ at $t=0$, we can indeed recover this adiabatic limit uniformly on compact subsets:   

\begin{prop} \label{prop:ALFfibration} Let $\left(A_i, B_i\right)$ be a complete solution to \eqref{eq:hitchFHNmetric} in the one-parameter $\mathbb{B}_7$-family with $m^2 =(4r_0^3 (2\bar{a} - \bar{b}))^{-1}$ fixed. Define  $A_i^\lambda (t) := \lambda ^{-2} A_i ( \lambda^2 t), B_i^\lambda (t) := \lambda ^{-1} B_i ( \lambda^2 t )$. Then if $\lambda = r_0$, we have $B_i^\lambda \rightarrow 2$, $A_i^\lambda \rightarrow f_i$ as $\lambda \rightarrow 0$ uniformly on compact intervals, where $f_i$ are the functions given in \eqref{eq:taubNUT}. 
\end{prop}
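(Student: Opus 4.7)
The approach is to rewrite the Hitchin flow \eqref{eq:hitchFHNmetric} in the rescaled variables, read off the $\lambda \to 0$ limit as the Taub-NUT ODE \eqref{eq:ODE1} decoupled from the constant system $B_1^\lambda = B_3^\lambda = 2$, and then apply continuous dependence on parameters to lift this from the formal limit to genuine convergence of solutions. Substituting $A_i(s) = \lambda^2 A_i^\lambda(s/\lambda^2)$ and $B_i(s) = \lambda B_i^\lambda(s/\lambda^2)$ into \eqref{eq:hitchFHNmetric}, with $s = \lambda^2 t$ and the chain rule, yields the $\lambda$-dependent rescaled system (dot denoting differentiation in the new time $t$):
\begin{align*}
\dot{A}_1^\lambda & = \tfrac{1}{2}\left(\tfrac{(B_1^\lambda)^2 + (B_3^\lambda)^2 - \lambda^2 (A_1^\lambda)^2}{B_1^\lambda B_3^\lambda} - \tfrac{A_3^\lambda}{A_1^\lambda}\right), & \dot{A}_3^\lambda & = \tfrac{1}{2}\left(\tfrac{(A_3^\lambda)^2}{(A_1^\lambda)^2} - \lambda^2 \tfrac{(A_3^\lambda)^2}{(B_1^\lambda)^2}\right), \\
\dot{B}_1^\lambda & = \tfrac{1}{2}\left(\tfrac{\lambda^2 (A_1^\lambda)^2 + (B_3^\lambda)^2 - (B_1^\lambda)^2}{A_1^\lambda B_3^\lambda} + \tfrac{\lambda^2 A_3^\lambda}{B_1^\lambda}\right), & \dot{B}_3^\lambda & = \tfrac{\lambda^2 (A_1^\lambda)^2 + (B_1^\lambda)^2 - (B_3^\lambda)^2}{A_1^\lambda B_1^\lambda}.
\end{align*}
Formally setting $\lambda = 0$, the $B_i^\lambda$-equations admit the constant solution $B_1^\lambda = B_3^\lambda = 2$, and with this substitution the $A_i^\lambda$-equations reduce exactly to \eqref{eq:ODE1}.

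Next I will match initial data near the singular orbit. The defining relation $m^2 = (4r_0^3(2\bar a - \bar b))^{-1}$ combined with the closure constraint $64 r_0(2\bar a + \bar b) = 1$ from \eqref{eq:b7family} determines $(\bar a, \bar b)$ in terms of $(r_0, m)$, and in particular gives finite limits for the rescaled combinations $\bar a r_0^3$ and $\bar b r_0^3$ as $r_0 \to 0$ with $m$ held fixed. Substituting the power series \eqref{eq:b7family} into the algebraic expressions \eqref{eq:LOtoFHN}, changing variable $s = r_0^2 t$ and setting $\lambda = r_0$, gives uniform-in-$r_0$ Taylor expansions of $A_i^\lambda, B_i^\lambda$ in a neighbourhood of $t = 0$; in the limit $r_0 \to 0$, these match the Taylor coefficients of $f_1, f_3$ from \eqref{eq:taubNUTexp} and the constant $2$, respectively.

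Finally, I will combine the previous parts via continuous dependence on the parameter $\lambda$. On any compact subinterval $[t_0, T] \subset (0, \infty)$ the rescaled right-hand sides are smooth jointly in $(t, A_i^\lambda, B_i^\lambda, \lambda)$, so Theorem \ref{thm:ic} applied with $\lambda$ as an additional parameter yields uniform convergence $A_i^\lambda \to f_i$, $B_i^\lambda \to 2$ on $[t_0, T]$, provided the initial data at $t = t_0$ converge, which is exactly what the previous step guarantees. The main obstacle is extending convergence across the singular point $t = 0$: for this I will repeat the singular-IVP analysis of Proposition \ref{prop:ends} via Theorem \ref{thm:4.3}, now applied to the $\lambda$-family above, treating $\lambda$ as an additional parameter entering the smooth part of the right-hand side and verifying that the non-resonance hypothesis (ii) of Theorem \ref{thm:4.3} holds uniformly in $\lambda$. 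This gives continuous dependence in $\lambda$ of the local solution down to $t = 0$, and letting $t_0 \to 0$ then produces the claimed uniform convergence on compact subsets of $[0, \infty)$.
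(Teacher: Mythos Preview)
Your proposal is correct and follows essentially the same route as the paper: rescale \eqref{eq:hitchFHNmetric} to obtain the $\lambda$-dependent system, identify the formal $\lambda=0$ limit as \eqref{eq:ODE1} with $B_i^\lambda\equiv 2$, and then invoke a singular initial-value problem argument at $t=0$ via Theorem \ref{thm:4.3} (with $\lambda$ as an extra parameter) to upgrade this to genuine uniform convergence. The paper makes two of your steps more concrete: first, rather than expanding \eqref{eq:LOtoFHN} directly, it uses the boundary extension conditions of \cite[Lemma 8, Appendix A]{LO18b} to write $B_i^\lambda = 2 + t^2 b_i$, $A_i^\lambda = \tfrac{t}{2} + t^3 a_i$ and derives the singular IVP in the variables $(a_i,b_i)$ explicitly; second, it actually computes the linearisation at the relevant initial point and checks the eigenvalues are $0,-3,-2,-8$, so that hypothesis (ii) of Theorem \ref{thm:4.3} is verified (and is visibly $\lambda$-independent). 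Your two-stage argument (first on $[t_0,T]$ via Theorem \ref{thm:ic}, then across $t=0$ via Theorem \ref{thm:4.3}) is slightly more roundabout than the paper's, which handles both in one stroke by applying Theorem \ref{thm:4.3} to the desingularised system with $\lambda$ as a parameter, but it is not wrong.
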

\begin{proof}
Recall the Hitchin flow equations \eqref{eq:hitchFHNmetric} give ODEs for the functions $A_1, B_1, A_3, B_3$ appearing as coefficients of the  metric. Now, for any constant $\lambda>0$,   by re-parameterising, solutions of \eqref{eq:hitchFHNmetric} are in one-to-one correspondence with solutions of the system:
\begin{gather} \label{eq:rescaledB7}
\begin{aligned}
    \dot{A}_1^\lambda & = \frac{1}{2}\left(\frac{(B_1^\lambda)^2+(B_3^\lambda)^2-\lambda^2 (A_1^\lambda)^2}{B_1^\lambda B_3^\lambda}-\frac{A_3^\lambda}{A_1^\lambda}\right), \\
    \dot{A}_3^\lambda & = \frac{1}{2}\left(\frac{(A_3^\lambda)^2}{(A_3^\lambda)^2}-\lambda^2 \frac{(A_3^\lambda)^2}{(B_1^\lambda)^2}\right),\\
    \dot{B}_1^\lambda & = \frac{1}{2}\left(\frac{\lambda^2 (A_1^\lambda)^2 +(B_3^\lambda)^2-(B_1^\lambda)^2}{A_1^\lambda B_3^\lambda}+ \lambda^2 \frac{A_3^\lambda}{B_1^\lambda}\right), \\
    \dot{B}_3^\lambda & = \frac{\lambda^2 (A_1^\lambda)^2 +(B_1^\lambda)^2-(B_3^\lambda)^2}{A_1^\lambda B_1^\lambda}.
\end{aligned}
\end{gather}
In particular, if we let $\lambda=r_0>0$, then for fixed $r_0$, this system has a one-parameter family of solutions extending over the singular orbit at $r=0$ by re-parameterising the family of solutions to \eqref{eq:hitchFHNmetric} given by the $\mathbb{B}_7$-family with $r_0$ fixed. We will claim this system admits a one-parameter family of solutions all the way to $\lambda =0$. 

To show this, we use the boundary extension conditions to the singular orbit in \cite[Lemma 8, Appendix A]{LO18b}, which imply that we can write the re-parameterised solutions to \eqref{eq:hitchFHNmetric} as $B^\lambda_i = 2 + t^2 b_i$, $A^\lambda_i = \tfrac{t}{2} + t^3 a_i$ for some smooth functions $a_i$, $b_i$. Using this, \eqref{eq:rescaledB7} gives a singular initial-value problem in $a_i, b_i$, given up to $O(t^2)$ terms, by:
\begin{align*}
t\dot{a}_1 &= -\tfrac{\lambda^2}{32} - a_3 - 2a_1& &t\dot{a}_3 = -\tfrac{\lambda^2}{32} - a_3 - 2a_1 \\
t\dot{b}_1 &= \tfrac{\lambda^2}{4} +2b_3 - 4b_1& &t\dot{b}_3=\tfrac{\lambda^2}{4} +4b_1 - 6b_3
\end{align*}
where $b_1(0) = b_3(0) = -4 \left( a_3 (0) + 2 a_1(0)\right) = \tfrac{\lambda^2}{8}$. This system has the linearisation given by the matrix 
\begin{align*}
    \begin{pmatrix}
-2 & -1 & &\\
-2 &-1 & & \\
 & &-4 & 2\\
 & & 4 & 6 
\end{pmatrix}
\end{align*}
with eigenvalues $0,-3,-2,-8$. The rescaled solutions of \eqref{eq:hitchFHNmetric} satisfy $ a_3 (0) = - 2 r_0^3 (2\bar{a} - \bar{b})$, thus, we get a one-parameter family by fixing this constant as we take $\lambda = r_0 \rightarrow 0$. 

In this limit, the re-scaled equations \eqref{eq:rescaledB7} for $A_i$ decouple to give the ODEs \eqref{eq:ODE1} for the metric coefficients on Taub-NUT. Thus, using the expansion  \eqref{eq:taubNUTexp}, we can identify this one-parameter family of solutions to \eqref{eq:rescaledB7} with the one-parameter family \eqref{eq:taubNUT} by taking $B_1 =B_3 = 2$, and $m^2 =(4r_0^3 (2\bar{a} - \bar{b}))^{-1}$. 
\end{proof}

With this adiabatic limit in hand, we now show that we can recover \eqref{eq:asdsoln1} from the $G_2$-instanton ODE, written in the form \eqref{eq:BBBGodesinitial1}. Recall that these have a two-parameter family of solutions $F^+ = \tfrac{1}{2} f^+_1 t^2 + O(t^4)$, $G^+ = \tfrac{1}{2} g^+_1 t^2 + O(t^4)$ near $t=0$:
\begin{theorem} \label{theorem:asdconvergance} Let $\left(F^+,G^+\right)$ be the complete solution to \eqref{eq:BBBGodesinitial1} with $\mu_1 :=\tfrac{r_0^4}{2}f_1^+$,   $\mu_3:= \tfrac{r_0^4}{2}g_1^+ $ fixed, subject to  $\mu_3 - \tfrac{1}{4m^2}\geq \mu_1\geq 0 $, for the one-parameter $\mathbb{B}_7$-family with $m^2 =(4r_0^3 (2\bar{a} - \bar{b}))^{-1}$ fixed. For any $\lambda>0$, let $F_+^\lambda (t) := F^+ ( \lambda^2 t)$, $G_+^\lambda (t) := G^+ ( \lambda^2 t )$. If $\lambda = r_0$, then as $\lambda \rightarrow 0$, then  $\left(F_+^\lambda, G_+^\lambda \right)$ converges to the solution \eqref{eq:asdsoln1} given by the expansion  \eqref{eq:asdsols} near $t=0$. 
\end{theorem}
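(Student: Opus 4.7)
The plan is to recast both the rescaled $G_2$-instanton system for $(F_+^\lambda, G_+^\lambda)$ and the Taub-NUT ASD system \eqref{eq:asdodes0} as a one-parameter family (in $\lambda$) of regular singular initial-value problems near $t=0$, and then to transfer convergence near the singularity to uniform convergence on compact subsets of $[0,\infty)$ via Theorem \ref{thm:4.3} and Theorem \ref{thm:ic}.

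First I would substitute the rescaled metric functions $A_i^\lambda, B_i^\lambda$ from Proposition \ref{prop:ALFfibration} into \eqref{eq:BBBGodesinitial1} after the reparameterisation $t \mapsto \lambda^2 t$. A direct chain-rule computation shows the system takes the form
\begin{equation*}
\dot{F}_+^\lambda = \frac{F_+^\lambda}{A_3^\lambda}\Bigl(1 - \lambda^2 \tfrac{A_1^\lambda A_3^\lambda}{B_1^\lambda B_3^\lambda} - G_+^\lambda\Bigr), \qquad \dot{G}_+^\lambda = \frac{A_3^\lambda}{(A_1^\lambda)^2}\Bigl(\bigl(1-\lambda^2 \tfrac{(A_1^\lambda)^2}{(B_1^\lambda)^2}\bigr)G_+^\lambda - (F_+^\lambda)^2\Bigr),
\end{equation*}
so that as $\lambda = r_0 \to 0$, the coefficients converge (uniformly on compact subsets of $(0,\infty)$, by Proposition \ref{prop:ALFfibration}) to those of the Taub-NUT ASD equations \eqref{eq:asdodes0} under the identification $F_+ \leftrightarrow a_1$, $G_+ \leftrightarrow a_3$. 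At the level of initial data, $F^+ = \tfrac{1}{2}f_1^+ t^2 + O(t^4)$ and $G^+ = \tfrac{1}{2}g_1^+ t^2 + O(t^4)$ rescale to $F_+^\lambda = \tfrac{\lambda^4}{2}f_1^+ t^2 + O(t^4)$ and similarly for $G_+^\lambda$, so the choice $\mu_1 = \tfrac{r_0^4}{2}f_1^+$, $\mu_3 = \tfrac{r_0^4}{2}g_1^+$ matches exactly with \eqref{eq:asdsols}.

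Next I would handle the singularity at $t=0$ by introducing new variables $X^\lambda := F_+^\lambda / t^2$, $Y^\lambda := G_+^\lambda / t^2$, under which both the rescaled $G_2$-instanton system and, in the $\lambda \to 0$ limit, the Taub-NUT ASD system become regular singular initial-value problems of the type covered by Theorem \ref{thm:4.3}. Using the expansions \eqref{eq:taubNUTexp} for the Taub-NUT metric coefficients and the analogous smooth extensions of $A_i^\lambda, B_i^\lambda$ over $t=0$ from \cite[Lemma 8, Appendix A]{LO18b} (which depend smoothly on $\lambda$), one can express everything as a joint singular IVP in $(X^\lambda, Y^\lambda)$ with $\lambda$ as a smooth parameter. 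A check of the non-resonance condition that $h\mathrm{Id} - d_{y_0}M_{-1}$ is invertible for all $h \in \mathbb{N}$, $h\geq 1$ (which holds because the leading-order linearisation has strictly negative eigenvalues, as in the proof of Proposition \ref{prop:ALFfibration}) then yields existence of solutions and continuous dependence on $\lambda$ on some uniform interval $[0, \epsilon)$. The constraint $\mu_3 - \tfrac{1}{4m^2} \geq \mu_1 \geq 0$ is precisely the condition that the limit lies in the subfamily \eqref{eq:asdsoln1} with $C \geq 0$ and $D \geq 0$, i.e., one extending smoothly over $t=0$ on the anti-self-dual bundle.

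Finally, to push the convergence to all compact subsets of $[0, \infty)$, I would fix any $T > \epsilon$ inside the interval of existence of the ASD limit and apply Theorem \ref{thm:ic} on $[\epsilon, T]$: the coefficients of the rescaled system converge uniformly on this compact interval by Proposition \ref{prop:ALFfibration}, and the initial data at $t = \epsilon$ converge by the previous step. The main obstacle I anticipate is the bookkeeping needed at the singular IVP step: one must verify carefully that the coupling of $(X^\lambda, Y^\lambda)$ to the metric data, itself described by the singular initial-value problem of \cite[Lemma 8]{LO18b}, can be packaged into a single instance of Theorem \ref{thm:4.3} whose hypotheses are uniform in the deformation parameter $\lambda$.
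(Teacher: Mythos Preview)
Your proposal is correct and follows essentially the same approach as the paper. One simplification worth noting: after the substitution $X^\lambda = F_+^\lambda/t^2$, $Y^\lambda = G_+^\lambda/t^2$, the resulting system for $(X^\lambda, Y^\lambda)$ is in fact \emph{regular} at $t=0$ (the $1/t$ terms cancel because $A_3^\lambda = \tfrac{t}{2} + O(t^3)$ and the remaining coefficients are $O(t^2)$), so ordinary smooth dependence on parameters suffices and the singular-IVP bookkeeping you anticipate as the main obstacle is unnecessary.
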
 
\begin{proof}
For each $\lambda>0$, rescaling \eqref{eq:BBBGodesinitial1} gives the system of ODEs:
    \begin{align*} 
 \dot{F}_+^\lambda &= \frac{F_+^\lambda}{A_3^\lambda}\left( 1- \lambda^2 \frac{A_1^\lambda A_3^\lambda}{B_1^\lambda B_3^\lambda} - G_+^\lambda \right) \\ \nonumber
 \dot{G}_+^\lambda &= \frac{A_3^\lambda}{(A_1^\lambda)^2}\left( \left( 1- \lambda^2 \frac{(A_1^\lambda)^2}{(B_1^\lambda)^2} \right) G_+^\lambda  - (F_+^\lambda )^2 \right).
\end{align*}
By our previous result in Proposition \ref{prop:ALFfibration}, this system converges smoothly to the system for anti-self dual connections on Taub-NUT \eqref{eq:asdodes0} in the limit $\lambda \rightarrow 0$.

To show the re-scaled solutions also converge smoothly in this limit, we use the extension conditions to the singular orbit in \cite[Appendix A.2, Lemma 10]{LO18b}, to define smooth functions $\tilde{F}, \tilde{G}$ such that $F_+^\lambda= t^2 \tilde{F}$, $G_+^\lambda= t^2 \tilde{G}$. These solve a smooth initial value-problem $\tilde{F}= O(t), \tilde{G} = O(t)$, which has a two-parameter family of solutions by fixing initial condition $\tilde{F}(0), \tilde{G}(0)$, for any $\lambda\geq 0$. The re-parameterised solutions to \eqref{eq:BBBGodesinitial1} have $\tilde{F}(0) = \tfrac{1}{2} r_0^4 f_1^+$,  $\tilde{G}(0) = \tfrac{1}{2}r_0^4g_1^+$, so keeping these constants fixed while taking $r_0 \rightarrow 0$ gives the two-parameter family of solutions \eqref{eq:asdsoln1}. 
\end{proof}

\bibliographystyle{alpha}
\bibliography{references}

\end{document}